\documentclass[11pt]{article}
  \usepackage{a4wide}
  \usepackage{amsmath}
  \usepackage{amssymb}
  \usepackage{latexsym}
  \usepackage[pdftex]{graphicx}
  \usepackage{color}
  \usepackage[mathscr]{euscript}
  \usepackage[section] {placeins}
  \usepackage{hyperref}

  \newenvironment{proof}{\vspace{1ex}\noindent{\bf Proof:}}{\hspace*{\fill}$\blacksquare$\vspace{1ex}}
  \newenvironment{proofof}[1]{\vspace{1ex}\noindent{\bf Proof of #1:}}{\hspace*{\fill}$\blacksquare$\vspace{1ex}}

  \newtheorem{theorem}{Theorem}[section]
  \newtheorem{lemma} [theorem] {Lemma}%[section]
  \newtheorem{corollary} [theorem] {Corollary}%[section]
  \newtheorem{claim} [theorem] {Claim}%[section]
  \newtheorem{proposition} [theorem] {Proposition}%[section]
  %[section]
  %[section]
  %[section]

%%%%%%%%%%%%%%%%%%%%%%%%%%%%%%%
%%% Tobias' macros 
%%%%%%%%%%%%%%%%%%%%%%%%%%%%%%%

\newcommand{\Bcal}[0]{\ensuremath{{\mathcal B}}}
\newcommand{\Ccal}[0]{\ensuremath{{\mathcal C}}}

\newcommand{\Ical}[0]{\ensuremath{{\mathcal I}}}

\newcommand{\Kcal}[0]{\ensuremath{{\mathcal K}}}

\newcommand{\Pcal}[0]{\ensuremath{{\mathcal P}}}

%%%%%%%%%%%%%%%%%%%%%%%%%%%%%%%%%%%%%%%%%%%%%%%%%%%
\newcommand{\eR}[0]{\ensuremath{ \mathbb R}}

\newcommand{\eN}[0]{\ensuremath{ \mathbb N}}

%%%%%%%%%%%%%%%%%%%%%%%%%%%%%%%%%%%%%%%%%%%%%%%%%%

%%%%%%%%%%%%%%%%%%%%%%%%%%%%%%%
\newcommand{\Ascr}[0]{\ensuremath{{\mathscr A}}}

\newcommand{\Cscr}[0]{\ensuremath{{\mathscr C}}}

%%%%%%%%%%%%%%%%%%%%%%%%%%%%%%%%%%%%%%%%%%%%%%
%%%\newcommand{\len}[0]{\ensuremath{{\mu}}}

\newcommand{\norm}[1]{\ensuremath{\|#1\|}}

\newcommand{\Pee}[0]{\ensuremath{{\mathbb P}}}
\newcommand{\Ee}[0]{\ensuremath{{\mathbb E}}}

\newcommand{\isd}[0]{\hspace{.2ex} \raisebox{-.1ex}{$=$} \hspace{-1.5ex} 
\raisebox{1ex}{{$\scriptstyle d$}} \hspace{.8ex} }

 \newcommand{\eps}{\varepsilon}

%%%%%%%%%%%%%%%%%%%%%%%%%%%%%%%%%%%%%%%%%%%%%%

\DeclareMathOperator{\diam}{diam}

\DeclareMathOperator{\inter}{int}

\DeclareMathOperator{\Po}{Po}

\DeclareMathOperator{\Be}{Be}
\DeclareMathOperator{\dd}{d}
\DeclareMathOperator{\Var}{Var}
%%%%%%%%%%%%%%%%%%%%%%%%
% \newcommand{\lb}[0]{\ensuremath{\lambda_{\text{b}}}}
% \newcommand{\lw}[0]{\ensuremath{\lambda_{\text{w}}}}
% \newcommand{\Xb}[0]{\ensuremath{\Xcal_{\text{b}}}}
% \newcommand{\Xw}[0]{\ensuremath{\Xcal_{\text{w}}}}
% \newcommand{\Pb}[0]{\ensuremath{\Pcal_{\text{b}}}}
% \newcommand{\Pw}[0]{\ensuremath{\Pcal_{\text{w}}}}
% \newcommand{\ob}[0]{\ensuremath{\omega_{\text{b}}}}
% \newcommand{\ow}[0]{\ensuremath{\omega_{\text{w}}}}
\newcommand{\lb}[0]{\ensuremath{\lambda_{\rm{b}}}}
\newcommand{\lw}[0]{\ensuremath{\lambda_{\rm{w}}}}

\newcommand{\Pb}[0]{\ensuremath{\Pcal_{\rm{b}}}}
\newcommand{\Pw}[0]{\ensuremath{\Pcal_{\rm{w}}}}
\newcommand{\ob}[0]{\ensuremath{\omega_{\rm{b}}}}
\newcommand{\ow}[0]{\ensuremath{\omega_{\rm{w}}}}
%%%%
\newcommand{\opt}[0]{\ensuremath{\overline{p}(t)}}
\newcommand{\op}[0]{\ensuremath{\overline{p}}}
\newcommand{\lf}[0]{\ensuremath{\text{left}}}
\newcommand{\ri}[0]{\ensuremath{\text{right}}}
\newcommand{\tp}[0]{\ensuremath{\text{top}}}
\newcommand{\bo}[0]{\ensuremath{\text{bottom}}}
%%%%%%%%%%%%%%%%%%%%%%%%
\newcommand{\Ax}[1]{{\bf(\Ascr-#1)}}
\newcommand{\CC}[1]{{\bf(\Cscr-#1)}}
%%%%%%%%%%%%%%%%%%%%%%%%%%%%%%%%%%
%%% End of Tobias' macros
%%%%%%%%%%%%%%%%%%%%%%%%%%%%%%%%%%

%%%%%%%%%%%%%%%%%%%%%%%%%%%%%%%%%%
%%% set roman enumeration
%%%%%%%%%%%%%%%%%%%%%%%%%%%%%%%%%%

 \title{The critical probability for confetti percolation equals $1/2$}
  \author{Tobias M\"uller\thanks{Utrecht University, Utrecht, the Netherlands. E-mail: \texttt{t.muller@uu.nl}. 
Part of the work in this paper was done while this author was supported by a VENI grant from Netherlands Organisation for Scientific Research (NWO).}}
  
  \begin{document}

  \maketitle

\begin{abstract}
In the confetti percolation model, or two-coloured dead leaves model, radius one disks arrive on
the plane according to a space-time Poisson process. 
Each disk is coloured black with probability $p$ and white with probability $1-p$.
In this paper we show that the critical probability for confetti percolation equals $1/2$.
That is, if $p>1/2$ then a.s.~there is an unbounded curve in the plane all of whose points are black; while
if $p \leq 1/2$ then a.s.~all connected components of the set of black points are bounded.
This answers a question of Benjamini and Schramm~\cite{BenjaminiSchramm98}.
The proof builds on earlier work by Hirsch~\cite{HirschArxiv} and makes use of an adaptation of 
a sharp thresholds result of Bourgain.
\end{abstract}

%%%%%%%%%%%%%%%%%%%%%%%%%%%%%%%%%%%%%%%%%%%%%%%%%%%%%%%%%%%%%%%%%%%%%%%%%%%%%%%
%%%%%%%%%%%%%%%%%%%%%%%%%%%%%%%%%%%%%%%%%%%%%%%%%%%%%%%%%%%%%%%%%%%%%%%%%%%%%%%

\section{Introduction and statement of results}

The confetti percolation model is informally described as follows.
Imagine that disks of equal radius have been raining down on the plane for a very long time. Each disk is either black (with probability $p$) 
or white (with probability $1-p$).
Suddenly the rain of confetti disks stops and we examine the pattern of colours that we see on the ground. 
Here the colour of a point of the plane is of course determined by the disk that was last to arrive among all disks that cover the point.

\begin{figure}[h]
\begin{center}
\includegraphics[width=5cm]{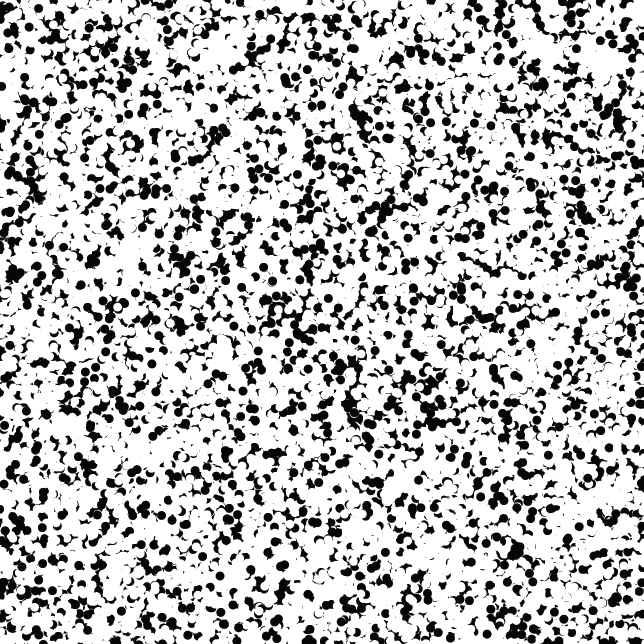}\hspace{.2cm}% 
\includegraphics[width=5cm]{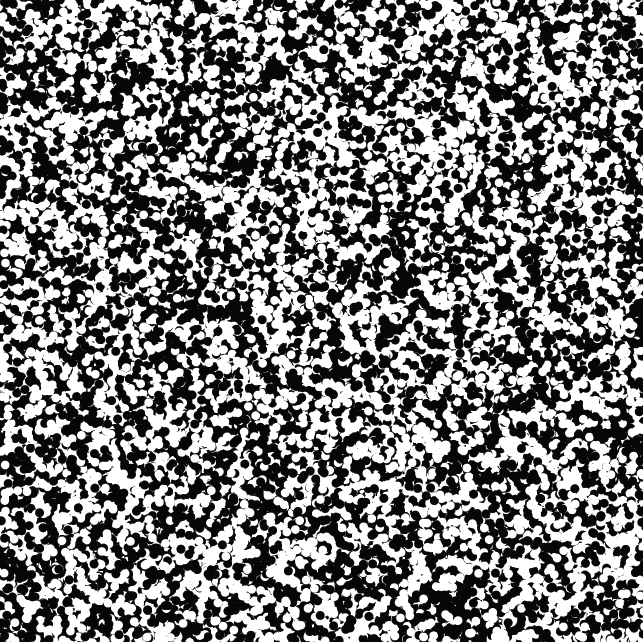}\hspace{.2cm}% 
\includegraphics[width=5cm]{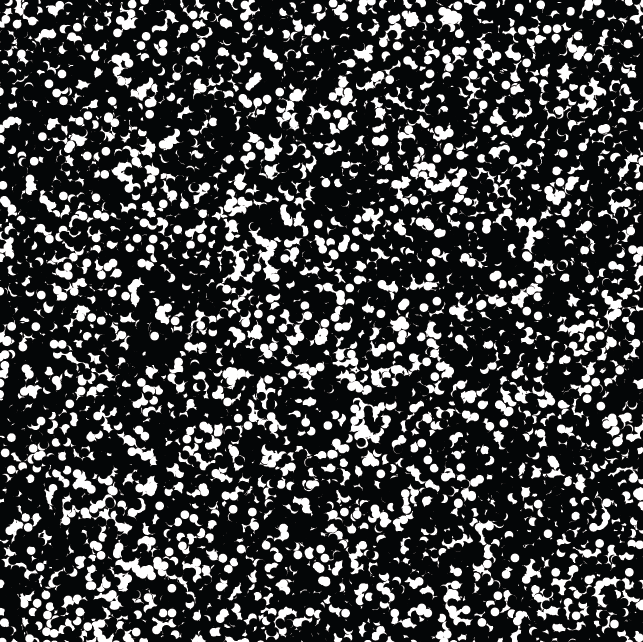}%
\vspace{-.5cm}
\end{center}
\caption{Simulations of confetti percolation with $p=\tfrac{1}{4}, \tfrac12, \tfrac34$. %1/4, 1/2, 3/4$.
A square of dimensions $200\times200$ is shown.\label{fig:simulation}}
\end{figure}

A more formal (and precise) definition of the confetti percolation model is as follows.
We start with a Poisson process $\Pcal$ of constant intensity $\lambda > 0$ on 
$\eR^2 \times (-\infty, 0]$. Around each point of $\Pcal$ we center a closed horizontal disk of radius one. We colour each of these disks black 
with probability $p$ and white with probability $1-p$, independently of the colours of all other disks and of $\Pcal$.
To determine the colour of a point $q \in \eR^2$, we draw a vertical line $\ell$ through $q$ (here and in the rest of the paper we identify 
$\eR^2$ with $\{z = 0\} \subseteq \eR^3$) and assign to $p$ the colour of the highest disk that intersects the line $\ell$.
We can think of the $z$-coordinate of a point of $\Pcal$ as the time when the corresponding confetti disk arrives, obscuring parts
of pre-existing confetti disks from view.
The confetti model is a special case of the {\em colour dead leaves model} introduced by Jeulin~\cite{Jeulin1996} for the purpose
of simulating mineral structures. The model of Jeulin allows for more colours and different shapes of the confettis (leaves).

We say that {\em percolation} occurs if there exists an unbounded curve $\gamma \subseteq \eR^2$ all of whose points are black.
As usual, the {\em critical probability} is defined as

\[ 
p_c := \inf \left\{ p : \Pee_p( \text{percolation} ) > 0\right\}. 
\]

\noindent
We index the probability only by $p$ and not by $\lambda$ since the precise value of $\lambda$ is irrelevant -- see the next 
section for a detailed explanation.
Benjamini and Schramm~\cite{BenjaminiSchramm98} asked whether $p_c = 1/2$.
Here we answer their question in the affirmative.

\begin{theorem}\label{thm:main}
$p_c = 1/2$.
%The critical probability for confetti percolation equals $1/2$.
\end{theorem}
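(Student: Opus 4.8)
The plan is to prove $p_c\ge 1/2$ and $p_c\le 1/2$ separately. For the lower bound, write $\theta(p)$ for the probability that the origin lies in an unbounded black component. A monotone coupling in which raising $p$ only recolours white disks black makes the black region, and hence $\theta$, non-decreasing in $p$, so it suffices to prove $\theta(1/2)=0$. I would do this by Zhang's argument. Percolation is a translation-invariant event and the marked Poisson process is ergodic, so percolation has probability $0$ or $1$; suppose it is $1$. For a large square $S_n=[-n,n]^2$ and each of its four sides $s$, let $A_s$ be the increasing event that $s$ is joined to infinity by a black path avoiding $\inter S_n$. Since a.s.\ an infinite black cluster meets $S_n$ for $n$ large, $\Pee_{1/2}(\bigcup_s A_s)\to 1$, and the square-root trick together with the dihedral symmetry of $S_n$ and the Harris--FKG inequality forces $\Pee_{1/2}(A_s)\to 1$ for each $s$. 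By the symmetry between the two colours at $p=1/2$ the same holds for the analogous white events, defined using a slightly larger concentric square so that the white arms emanate from outside $S_n$. Fixing $n$ large, with positive probability one simultaneously sees black arms to infinity from the left and right sides of $S_n$ and white arms to infinity from the top and bottom sides of the larger square; these produce a black curve separating the two white arms and a white curve separating the two black arms, and in the plane two such curves must cross --- impossible. Hence $\theta(1/2)=0$, so $p_c\ge 1/2$ (this bound is, in essence, Hirsch's~\cite{HirschArxiv}).

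For the upper bound I must show $\theta(p)>0$ for every $p>1/2$. First I would record the locality property that the colour of a point $q$ is the colour of the topmost disk whose centre projects into the closed unit disk about $q$; hence the colouring of a region $R$ is measurable with respect to the disks meeting $R^{+1}=\{x:\dist(x,R)\le 1\}$, and crossing events of regions with disjoint $1$-neighbourhoods are independent. Next I would pin down crossing probabilities at criticality. A.s.\ only finitely many disks affect the colouring of $[0,n]^2$ and the resulting curvilinear partition is generic, so exactly one of ``$[0,n]^2$ has a black left-right crossing'' and ``$[0,n]^2$ has a white top-bottom crossing'' holds; by the colour symmetry and the $90^\circ$ rotational symmetry of the square at $p=1/2$ these two complementary events are equiprobable, whence $\Pee_{1/2}(\Gscr_n)=1/2$ for every $n$, where $\Gscr_n$ is the event of a black left-right crossing of $[0,n]^2$.

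The heart of the argument is to boost this to $\Pee_p(\Gscr_n)\to 1$ for every fixed $p>1/2$. Discarding disks with $z<-T$ and truncating the spatial window changes $\Pee_p(\Gscr_n)$ by only $o(1)$ (a disk low in the stack almost never shows through the whole square), so up to arbitrarily small error $\Gscr_n$ is an increasing event on a product space of finitely many i.i.d.\ marked disks. On this space I would invoke an adaptation of Bourgain's sharp-threshold theorem which, together with the Russo--Margulis formula, gives
\[
\frac{d}{dp}\Pee_p(\Gscr_n)\ \ge\ c\,\Pee_p(\Gscr_n)\bigl(1-\Pee_p(\Gscr_n)\bigr)\,\log\frac{1}{\max_D I_D},
\]
where $I_D$ is the influence of disk $D$. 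A disk $D$ can be pivotal for $\Gscr_n$ only if its unit-radius region of influence supports a local four-arm configuration --- two black arms reaching the left and right sides of the square, two white arms reaching the top and bottom --- which, by the RSW bounds for confetti percolation and an arm-separation estimate, has probability $o(1)$ as $n\to\infty$, uniformly in $D$ and in $p$ near $1/2$; hence $\max_D I_D\to 0$. Integrating the differential inequality then shows that, for each fixed $\delta>0$, the set of $p$ with $\Pee_p(\Gscr_n)\in[\delta,1-\delta]$ has width $O\bigl(1/\log(1/\max_D I_D)\bigr)\to 0$; since $\Pee_{1/2}(\Gscr_n)=\tfrac12$, this window collapses onto $1/2$, so $\Pee_p(\Gscr_n)\to 1$ for every $p>1/2$. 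By rotational symmetry the black top-bottom crossing of $[0,n]^2$ also has probability $\to 1$, and a standard RSW-type gluing upgrades this to $\Pee_p(\Hscr_n)\to 1$ and $\Pee_p(\Vscr_n)\to 1$, where $\Hscr_n$ and $\Vscr_n$ are the events that $[0,3n]\times[0,n]$ and $[0,n]\times[0,3n]$ are crossed the long way in black. Finally, fixing $p>1/2$ and $n$ large enough that $\Pee_p(\Hscr_n),\Pee_p(\Vscr_n)>1-\eps$, I would arrange overlapping translates of these rectangles in the standard way so that any connected family of rectangles crossed in black chains into an unbounded black path; by the locality property the ``good'' rectangles dominate a highly supercritical finite-range-dependent site percolation, which percolates by a Peierls argument. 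Thus $\theta(p)>0$, giving $p_c\le 1/2$ and, with the lower bound, $p_c=1/2$.

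I expect the main obstacle to be the sharp-threshold step: both transporting Bourgain's inequality to the continuum, marked-Poisson setting, and, above all, proving the uniform smallness of $\max_D I_D$, i.e.\ that pivotality of a single confetti disk forces a local four-arm event whose probability tends to zero uniformly in the disk and in $p$. By comparison the finite-dimensional approximation of $\Gscr_n$, the RSW input, and the Peierls renormalisation should be comparatively routine, and the lower bound $p_c\ge 1/2$ is already available.
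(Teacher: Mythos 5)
Your lower bound via Zhang's argument is fine and is indeed Hirsch's route to $p_c\geq 1/2$; the paper simply cites it. For the upper bound your overall skeleton --- finite approximation, sharp threshold, RSW, Peierls renormalisation --- has the right shape, but the crucial sharp-threshold step is not the one used here, and it contains exactly the gap that this paper was designed to avoid.

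A labelling point first: the differential inequality
\[
\frac{\mathrm d}{\mathrm dp}\Pee_p(\Gscr_n)\ \geq\ c\,\Pee_p(\Gscr_n)\bigl(1-\Pee_p(\Gscr_n)\bigr)\,\log\frac{1}{\max_D I_D}
\]
is the KKL/BKKKL/Talagrand/Friedgut--Kalai type inequality, not Bourgain's theorem. Bourgain's result, in the form of Proposition~\ref{prop:BourTobias}, says something different and has no $\log(1/\max_D I_D)$ factor: if the derivative along a monotone path is $O(1)$ while the probability is bounded away from $0$ and $1$, then there is a \emph{bounded} number $K$ of coordinates whose conditioning shifts the probability by a constant $\delta>0$, with $K,\delta$ depending on neither $n$ nor the Bernoulli means. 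That last feature is what lets the paper discretise arbitrarily finely.

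More substantially, the step you yourself flag as the main obstacle --- proving $\max_D I_D\to 0$ via a four-arm event, \emph{uniformly in $p$ near $1/2$} --- is precisely the bottleneck. Hirsch's RSW input (Theorem~\ref{thm:HirschRSW}) holds at $p=1/2$ only; transporting pivotality/arm bounds to a neighbourhood of $1/2$ needs a bootstrap of the Bollob\'as--Riordan kind, and that is exactly where Hirsch's own implementation of this programme (via Friedgut--Kalai, which after discretisation is your inequality) hit technical obstructions that forced him to restrict to square confetti. The paper sidesteps individual-influence bounds entirely: assuming no sharp threshold, Proposition~\ref{prop:BourTobias} supplies $K$ boosting cubes with projections $q_1,\dots,q_K$; then one builds nested black annulus circuits $A_{q_j,s_\ell}$ around each $q_j$ at well-separated scales, bounds their probabilities from below using RSW \emph{only at $p=1/2$} together with monotonicity and Harris' inequality (Lemma~\ref{lem:HarrisHirsch}), takes $m$ large so that with probability $\geq 1-\delta/3$ every $q_j$ is encircled, and concludes by a rerouting argument (Claim~\ref{clm:cruc}) that the crossing event is then essentially independent of the states of the boosting cubes --- contradicting the $\delta$-boost. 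No pivotality estimate uniform in $p$ or in the cube is ever needed. Your self-duality observation $\Pee_{1/2}(\Gscr_n)=1/2$ is also more than what is required; the paper gets by with the weaker $\limsup_s\Pee_{1/2}(H_{3s\times s})>0$, since the machinery only needs the $p=1/2$ crossing probability to be bounded away from $0$.

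So: correct overall architecture and correct identification of the hard step, but the sharp-threshold tool you name does not give the inequality you wrote, and the four-arm/influence bound you would need to run your version is an open technical difficulty in this model, not a routine RSW consequence. The paper's contribution is the replacement of that influence bound by the geometric encirclement argument built on the genuine Bourgain theorem.
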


Very recently, Hirsch~\cite{HirschArxiv} proved a version of Theorem~\ref{thm:main} for the case when instead of disks, squares are used as 
the confetti. 
Part of Hirsch's arguments in fact work for a wide range of shapes. %, including disks. 
In particular, the fact that $p_c \geq 1/2$, also in the setting with disk-shaped confetti, is already proved by Hirsch.
However, technical issues forced Hirsch to restrict himself to the case of squares in his proof of the full result. 
He also asked for generalizations of his result to more general shapes.
Our proof of Theorem~\ref{thm:main} does in fact work for a large class of shapes. 
For the sake of the exposition we will focus on disk shaped confettis and we sketch the adaptations that need to be made
to generalise the proof later, in Section~\ref{sec:othershapes}.
A crucial step in our approach is the application of an asymmetric version of a powerful ``sharp threshold'' result of Bourgain 
(that appeared in the appendix to Friedgut's paper~\cite{Friedgut99}). We believe similar arguments should work in many other
percolation models.

\medskip

{\bf Overview of the paper and the main ideas in the proof.}
With the work that has already been done by Hirsch~\cite{HirschArxiv}, all that remains for us to prove is that 
percolation does occur almost surely when $p>1/2$.
By standard machinery in percolation theory, it in fact suffices to show that, when $p>1/2$, a rectangle 
of dimensions $3s\times s$ has a black crossing in the long direction with probability that 
will get arbitrarily close to one as we send $s$ to infinity (and $p>1/2$ stays fixed).
To achieve this, we first show that we can approximate such a crossing event by a discrete event defined in
terms of finitely many Bernoulli random variables. To define this discrete event, we dissect a relevant part of $\eR^2 \times (-\infty,0]$ 
into small, equal-sized cubes. Each Bernoulli random variable indicates whether or not there is a black, respectively white, point of the 
Poisson process inside a given cube. These Bernoulli random variables are independent and their means take one of 
two values, depending on whether the random variable detects black or white points. 
A powerful tool by Bourgain (that appeared in the appendix to Friedgut's paper~\cite{Friedgut99}) gives a condition which must hold
if a monotone event defined in terms of i.i.d.~Bernoulli random variables does not have a rapid transition from probability nearly zero to 
probability nearly one, as we vary the common mean of the Bernoullis from zero to one. 
Roughly speaking, it says that if there is no such rapid transition and the parameters are chosen such that the probability of our monotone 
event is neither too small nor too large, then
there must be a bounded number of variables such that the probability of the event, conditioned on those variables all equalling one, is 
close to one.
Proposition~\ref{prop:BourTobias} below generalizes this result to the case where the Bernoulli random variables are independent but 
may have different means. We use it show that if the probability of the (discrete approximations to our) crossing events does not undergo 
such a sharp increase at $p=1/2$, then there is a bounded number of cubes such that the status of those cubes
influences the crossing events noticeably. That is, conditioning on black/white points in these
cubes increases/decreases the crossing probability by a constant.
This would mean that with constant (unconditional) probability it holds that (a) every crossing gets close to the projection of at least 
one of these cubes on the plane and (b) there is at least one crossing. 
We will see that this is impossible, and hence that there must be a rapid transition 
for crossing probabilities at $p=1/2$.
Together with standard percolation machinery this gives that percolation does occur almost surely when $p>1/2$.

What distinguishes our approach from that of Hirsch~\cite{HirschArxiv} is that his approach, which follows that of 
Bollob\'as and Riordan~\cite{BollobasRiordanVoronoi}, relied on a result of Friedgut and Kalai~\cite{FriedgutKalai96}
to show that there is a ``sharp threshold'' for crossing probabilities, while we instead will use Proposition~\ref{prop:BourTobias}
to achieve the same.
Like Proposition~\ref{prop:BourTobias}, the result of Friedgut and Kalai applies to monotone events defined in terms of 
independent Bernoulli random variables, but in order for it to imply a sharp threshold it is needed that the common mean of these
Bernouilli random variables is not too small. This meant that the discretizations Hirsch used could not be arbitrarily fine, which 
in turn led to considerable technical difficulties. In contrast, our use of Proposition~\ref{prop:BourTobias} does allow us to 
use arbitrarily small cubes in our discretizations. %% and this removes many of the technical difficulties Hirsch had to deal with.

In the next section, we provide some preliminary discussion and results that we will need in the proof of Theorem~\ref{thm:main}. 
In Section~\ref{sec:DiscrAppr} we define and formally justify the discrete approximations to the box-crossing events.
Section~\ref{sec:pGreaterHalf} contains the main part of our argument, which applies Proposition~\ref{prop:BourTobias} to the 
discrete approximations to crossing events. In Section~\ref{sec:grandfinale} we direct the reader to a place in the literature where the standard 
argument that completes the proof of Theorem~\ref{thm:main} can be found.
Section~\ref{sec:othershapes} briefly sketches the changes that need to be made to adapt the proof to work in the case of other confetti shapes 
besides the unit disk. The proof of Proposition~\ref{prop:BourTobias} can be found in Appendix~\ref{sec:BourApp}.

\section{Notation and preliminaries\label{sec:preliminaries}}

Throughout this paper, $\Po(\lambda)$ will denote the Poisson distribution
with parameter $\lambda$ and $\Be(p)$ will denote the Bernoulli distribution with parameter $p$.

A subset $A$ of the discrete hypercube $\{0,1\}^n$ is called an {\em up-set} if
it is closed under increasing coordinates. 
That is, whenever we take a point of $A$ and we change one of its coordinates into
a one, then the resulting point is still in $A$.
That is, if $\overline{a} = (a_1, \dots, a_n) \in A, \overline{b} = (b_1,\dots, b_n) \in \{0,1\}^n$ and 
$a_i \leq b_i$ for all $i$ then also $b \in A$. 

For $\overline{p} = (p_1,\dots, p_n) \in (0,1)^n$ the notation $\Pee_{\overline{p}}(.)$ will 
signify the situation where $X_1, \dots, X_n$ are independent random variables with 
$X_i \isd \Be(p_i)$.
Observe that, for every $A \subseteq \{0,1\}^2$, the probability
$\Pee_{\overline{p}}[ (X_1,\dots, X_n ) \in A ]$ can be written as a
polynomial in $p_1,\dots, p_n$.
In particular, this probability is a continuous function of the 
$p_i$-s and the partial derivatives $\frac{\partial}{\partial p_i}\Pee_{\overline{p}}{\big[} (X_1,\dots, X_n) \in A {\big]}$
exist.
Note also that if $A$ is an up-set then $\Pee_{\overline{p}}[ (X_1,\dots, X_n ) \in A ]$ is non-decreasing 
in each parameter $p_i$.

The following result is key to our proof of Theorem~\ref{thm:main}. It can be considered as an asymmetric version 
of Bourgain's powerful sharp threshold result (that appeared in the appendix of Friedgut's influential paper~\cite{Friedgut99}).

\begin{proposition}\label{prop:BourTobias}
 For every $C > 0$ and $0 < \alpha < 1/2$ there exist $K = K(C,\alpha) \in\eN, \delta = \delta(C,\alpha) > 0$ such that
 the following holds, for every $n \in \eN$ and every up-set $A \subseteq \{0,1\}^n$. \\
 If $\overline{p} \in (0,1)^n$ is such that $\Pee_{\overline{p}}{\big[ }(X_1,\dots, X_n) \in A {\big]} \in (\alpha,1-\alpha)$
and 
\[  \sum_{i=1}^n p_i(1-p_i) \cdot \frac{\partial}{\partial p_i}\Pee_{\overline{p}}{\big[} (X_1,\dots, X_n) \in A {\big]} \leq C, 
 \]%
%
% 
% \begin{itemize}
%  \item $\Pee_{\overline{p}}{\big[ }(X_1,\dots, X_n) \in A {\big]} \in (\alpha, 1-\alpha)$, and;
%  \item $\sum_{i=1}^n p_i(1-p_i)\frac{\partial}{\partial p_i}\Pee_{\overline{p}}{\big[} (X_1,\dots, X_n) \in A {\big]} \leq C$, 
%\end{itemize}
then there exist indices $i_1, \dots, i_K \in \{1,\dots, n\}$ such that one of the following holds:
\begin{enumerate}
 \item[{\bf(a)}] $\Pee_{\overline{p}}{\big[} (X_1, \dots, X_n) \in A {\big|} X_{i_1} = \dots = X_{i_K} = 1 {\big]}
 \geq \Pee_{\overline{p}}{\big[} (X_1, \dots, X_n) \in A {\big]} + \delta$, or
 \item[{\bf(b)}] $\Pee_{\overline{p}}{\big[} (X_1, \dots, X_n) \in A {\big|} X_{i_1} = \dots = X_{i_K} = 0 {\big]}
 \leq \Pee_{\overline{p}}{\big[} (X_1, \dots, X_n) \in A {\big]} - \delta$.
\end{enumerate}
\end{proposition}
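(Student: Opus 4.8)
The plan is to reduce the asymmetric statement to the symmetric version of Bourgain's theorem by a ``coordinate blow-up'' argument. Given $\overline{p} = (p_1,\dots,p_n) \in (0,1)^n$ and the up-set $A \subseteq \{0,1\}^n$, I would first approximate each $p_i$ by a rational $p_i' = m_i / N$ with a common large denominator $N$ (the error incurred can be made small enough, using the continuity of $\Pee_{\overline{p}}[\,\cdot \in A\,]$ and of its partial derivatives in the $p_i$, that the hypotheses of the symmetric theorem still hold with slightly worse constants). Then I would replace each coordinate $i$ by a block of $N$ i.i.d.\ fair--ish coins: more precisely, introduce a new up-set $A' \subseteq \{0,1\}^{nN}$ on coordinates grouped into $n$ blocks of size $N$, where membership in $A'$ is decided by first forming, for each block, the indicator that ``at least one coordinate in the block is $1$'' (if $p_i'$ is to be simulated as $1 - (1-q)^N$ for the uniform block-probability $q$), and then feeding these $n$ indicators into $A$. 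One checks $A'$ is again an up-set. Choosing the block size and the uniform parameter $q$ so that $1-(1-q)^N = p_i'$ is not possible with a single $q$ for all $i$ simultaneously, so instead I would let block $i$ have $N_i$ coordinates with a single global uniform parameter $q$, picking $q$ small and $N_i \approx \log(1-p_i')/\log(1-q)$ rounded to an integer; the rounding error is again absorbed by continuity. This realises $(X_1,\dots,X_n)$ under $\Pee_{\overline{p}'}$ as a (monotone) function of $N_1 + \dots + N_n$ i.i.d.\ $\Be(q)$ variables.

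Next I would verify that the two hypotheses of the symmetric Bourgain result transfer. The event $A'$ has the same probability as $A$, so it still lies in $(\alpha', 1-\alpha')$. For the derivative/influence sum: the relevant quantity in the symmetric theorem is $\sum_{j} \Pee_q[\text{$j$ is pivotal for } A']$, and by the chain rule the contribution of all coordinates in block $i$ equals $p_i'(1-p_i') \cdot \frac{\partial}{\partial p_i}\Pee_{\overline{p}'}[\,\cdot\in A\,]$ up to the approximation error, because flipping any single coordinate in block $i$ can only change the block-indicator when that coordinate is the unique $1$ in the block, and summing the pivotality over the block telescopes into the single-parameter derivative times the variance factor $p_i'(1-p_i')$. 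Hence the influence sum for $A'$ is at most $C$ (plus a controlled error). Now apply the symmetric Bourgain theorem to $A'$ with the uniform parameter $q$: it yields a bounded set $J$ of at most $K_0 = K_0(C,\alpha)$ coordinates (among the $\sum N_i$) such that conditioning all of them to $1$ (resp.\ to $0$, in the down-set dual formulation) shifts the probability of $A'$ by a constant $\delta_0$.

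Finally I would push this conclusion back to the original coordinates. Let $I \subseteq \{1,\dots,n\}$ be the set of blocks met by $J$; then $|I| \le |J| \le K_0 =: K$. Conditioning the coordinates in $J$ to be $1$ only increases the block-indicators for blocks in $I$ (monotone coupling), so by monotonicity of $A$,
\[
\Pee_{\overline{p}'}\bigl[(X_1,\dots,X_n)\in A \bigm| X_i = 1 \text{ for } i \in I\bigr] \;\ge\; \Pee_{q}\bigl[A' \bigm| \text{coords in } J \text{ are } 1\bigr] \;\ge\; \Pee_{q}[A'] + \delta_0,
\]
which gives case (a) with the index set $\{i : i \in I\}$ (padded arbitrarily up to size $K$); the dual argument, starting from the down-set complement of $A$, gives case (b). Replacing $\overline{p}'$ by $\overline{p}$ costs another additive $o(1)$ error, so setting $\delta := \delta_0/2$ and absorbing all the accumulated approximation errors into the choice of $N$ (which we are free to take as large as we like) finishes the proof. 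The main obstacle I anticipate is bookkeeping the several layers of approximation—rational approximation of the $p_i$, integer rounding of the block sizes $N_i$, and the pivotality-to-derivative identification—so that every error term is uniformly controlled by quantities depending only on $C$ and $\alpha$ and not on $n$; making the blow-up work with a \emph{single} uniform parameter $q$ across blocks of different sizes, rather than blocks with different parameters, is the key trick that lets the symmetric theorem apply cleanly.
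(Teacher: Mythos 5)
Your approach is genuinely different from the paper's. The paper invokes the version of Bourgain's theorem for \emph{general} finite probability spaces (from O'Donnell's book), and reduces the asymmetric case to it by a ``diagonal'' tensoring trick: set $V := V_1\times\dots\times V_n$, $\pi := \pi_1\otimes\dots\otimes\pi_n$, and $g(z_1,\dots,z_n) := f(z_{1,1},\dots,z_{n,n})$, so that $I(g) = I(f)$ and boosters project back trivially. You instead try to reduce to the classical Bernoulli-with-uniform-parameter version via a block blow-up. Unfortunately, the blow-up does not preserve bounded total influence, and this is a genuine gap.

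The problematic step is the chain-rule claim that the contribution of block $i$ to $\sum_j\Pee_q[j\text{ pivotal for }A']$ equals $p_i'(1-p_i')\cdot\frac{\partial}{\partial p_i}\Pee_{\op'}[A]$. By symmetry within a block, that contribution actually equals $N_i(1-q)^{N_i-1}\cdot\Pee_{\op'}[\,i\text{ pivotal for }A\,]$, and since $(1-q)^{N_i}=1-p_i'$ this is
\[
N_i\,\frac{1-p_i'}{1-q}\cdot\frac{\partial}{\partial p_i}\Pee_{\op'}[A],
\]
not $p_i'(1-p_i')\cdot\frac{\partial}{\partial p_i}\Pee_{\op'}[A]$; those two cannot coincide because $N_i\ge 1 > p_i'(1-q)$. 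Even after multiplying by the normalisation $q(1-q)$ that appears in the definition of total influence used here, the block-$i$ contribution is $\approx -(1-p_i')\log(1-p_i')\cdot\frac{\partial}{\partial p_i}\Pee$, which exceeds the desired $p_i'(1-p_i')\cdot\frac{\partial}{\partial p_i}\Pee$ by the factor $\frac{-\log(1-p_i')}{p_i'}\ge 1$, unbounded as $p_i'\to1$. The hypotheses do not rule this out: take $A=\{(1,\dots,1)\}$ on $\{0,1\}^n$ with $p_i = 1-c/n$ for a suitable constant $c$; then $\Pee_{\op}[A]\approx e^{-c}$ lies in $(\alpha,1-\alpha)$, and $\sum_i p_i(1-p_i)\frac{\partial}{\partial p_i}\Pee_{\op}[A]\approx c e^{-c}$ is bounded, yet each $N_i\approx\frac{1}{q}\log(n/c)$ and the total influence of $A'$ works out to $\Theta(\log n)$, which tends to infinity with $n$. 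Consequently the constants $K$ and $\delta$ produced by the symmetric Bourgain theorem would degenerate as $n\to\infty$, which defeats the purpose of the proposition. An ``OR of a $\Be(q)$-block'' gadget inherently pays a logarithmic overhead in influence when simulating parameters near $1$, and since $A'$ must remain an up-set you cannot dualise only those coordinates; I don't see a way to repair the reduction to the uniform-parameter theorem without essentially rebuilding the general-probability-space machinery the paper uses.
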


\noindent
What makes this result potentially very useful %in a variety of contexts %including the present paper 
is the fact that $K$ and $\delta$ do not depend on the particular up-set or even the number of variables $n$.
Proposition~\ref{prop:BourTobias} can be derived in a relatively straightforward manner from a version of Bourgain's sharp threshold result for general 
probability spaces that can be found in O'Donnell's new book~\cite{ODonnellBoek}.
We defer the proof to Appendix~\ref{sec:BourApp}.

Recall that in the definition of the confetti model, we used a constant intensity Poisson process $\Pcal$ on 
$\eR^2\times (-\infty, 0]$. Throughout the paper, we will denote its intensity by $\lambda > 0$.
It follows from standard properties of the Poisson process (see for instance~\cite{KingmanBoek}) that the 
precise value of $\lambda$ is irrelevant: if we rescale the $z$-coordinates of the points of $\Pcal$ by a constant $a > 0$ then we 
obtain a Poisson process on $\eR^2 \times (-\infty, 0]$ with intensity $\lambda / a$. Since the vertical ordering of the
confetti disks is unchanged by this scaling, so are the colours that each of the points of the plane receives.

%%We will sometimes also refer to the confetti disks as leaves.
It is convenient to enumerate the points of our Poisson process $\Pcal$ as $\Pcal = \{ p_1, p_2, \dots \}$.
Furthermore, we let $C_i$ (for $i$-th confetti disk) denote the closed 
horizontal disk around $p_i$ of radius one and we let $D_i$ denote the projection of $C_i$ onto $\eR^2$ 
(recall that we identify $\eR^2$ with $\{z=0\} \subseteq \eR^3$).
The {\em visible part} of $D_i$ is defined as:

\[ V_i := D_i \setminus \bigcup_{j : z_j > z_i } \inter( D_j ). \]

\noindent
(Recall that $z_i$ is the last coordinate of $p_i$.)
Note that a visible part may consist of more than one path-connected component.
We will call the (path-) connected components of the visible parts {\em cells}.
The reader can probably easily convince him- or herself of the following straightforward fact, a formal proof of which can for instance be found 
in the work of Bordenave et al.~\cite{Bordenave2006}.

\begin{lemma}[\cite{Bordenave2006}, Lemma 2]\label{lem:bordenave}
Almost surely, every point of $\eR^2$ is contained in some cell, and every bounded set intersects only finitely many cells.
\end{lemma}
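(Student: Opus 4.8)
The plan is to produce a single almost-sure event on which both assertions hold \emph{deterministically}, thereby side-stepping the (otherwise fatal) difficulty that there are uncountably many points $q\in\eR^2$ to control. Everything rests on two standard almost-sure properties of $\Pcal$. First, \emph{local finiteness}: every bounded Borel subset of $\eR^2\times(-\infty,0]$ contains only finitely many $p_i$ --- it is enough to check this for the countably many sets $B(0,m)\times[-m,0]$, $m\in\eN$, each of which carries finite intensity mass. Second, \emph{density of the shadow}: the set of centres of the disks $D_i$ is dense in $\eR^2$ --- for any fixed open ball $B$ the probability that $\Pcal$ avoids $B\times[-n,0]$ tends to $0$ as $n\to\infty$, so almost surely $\Pcal\cap(B\times(-\infty,0])\neq\emptyset$, i.e.\ some $D_i$ has its centre in $B$; intersecting these almost-sure events over all balls $B$ with rational centre and radius gives a dense set of centres. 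I condition on the intersection of these two events from here on.

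For the first assertion, fix $q\in\eR^2$. Density of the shadow gives some $i$ with $q\in\inter(D_i)$, so the set $\{z_i:q\in D_i\}$ is non-empty; local finiteness, applied to $B(q,1)\times[-s,0]$ (note that $q\in D_i$ forces the centre of $D_i$ into $B(q,1)$), shows that only finitely many of these $z_i$ lie above any fixed level, so the set attains a maximum, say at index $i_0$. Any $j$ with $z_j>z_{i_0}$ has $q\notin D_j$ by maximality, hence $q\notin\inter(D_j)$; therefore $q\in V_{i_0}$, so $q$ lies in the cell (connected component of $V_{i_0}$) that contains it.

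For the second assertion it suffices, since every bounded set lies in some $B(0,t)$, to show that $B(0,t)$ meets only finitely many cells for fixed $t$. The crucial step is compactness: by the first assertion the open disks $\inter(D_i)$ cover the compact set $B(0,t)$, so finitely many of them do; fix $s_0>0$ with $z_i\ge -s_0$ for every $D_i$ in this finite subcover. Then for any $i$ with $z_i<-s_0$,
\[ D_i\cap B(0,t)\subseteq B(0,t)\subseteq\bigcup_{j:z_j\ge -s_0}\inter(D_j)\subseteq\bigcup_{j:z_j>z_i}\inter(D_j), \]
so $V_i\cap B(0,t)=\emptyset$. Hence a cell meeting $B(0,t)$ must be a component of some $V_i$ with $z_i\ge -s_0$ and $D_i\cap B(0,t)\ne\emptyset$, and there are finitely many such $i$ by local finiteness (applied to $B(0,t+1)\times[-s_0,0]$). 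Finally, each such $V_i$ is the closed disk $D_i$ with only finitely many open disks deleted --- finitely many because any $D_j$ with $z_j\in(z_i,0]$ that meets $D_i$ has its centre in the bounded set $B(c_i,2)$, where $c_i$ is the centre of $D_i$ --- so $V_i$ has finitely many connected components. Summing over the finitely many relevant $i$ finishes the proof.

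There is no genuinely hard step here; this is precisely the kind of routine measure-theoretic fact that the authors flag as ``straightforward''. The two places that want a little care are the reduction of local finiteness and density to countably many events --- so that everything afterwards is purely deterministic --- and the compactness argument in the second part; I would write those out explicitly rather than leave them to the reader.
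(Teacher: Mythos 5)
The paper does not prove this lemma itself; it cites Bordenave, Gousseau and Roueff~\cite{Bordenave2006}, so there is no in-paper argument to compare yours against. Your proof is correct and self-contained, and it is the natural argument: local finiteness plus density of centres, a maximum-$z$ argument for the first assertion, and compactness for the second. Two small remarks. In the second part you invoke ``the first assertion'' to obtain the open cover of $\bar B(0,t)$ by the sets $\inter(D_i)$; but the first assertion only places $q$ in some $V_{i_0}\subseteq D_{i_0}$, which is a closed set, so it does not directly hand you an open cover. What you actually need --- and already have, from the density-of-centres step --- is that every $q$ lies in $\inter(D_i)$ for some $i$; you should cite that directly. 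Second, the closing claim that a closed disk with finitely many open disks removed has finitely many connected components is true but deserves a line: it follows, for example, because such a set is compact and semi-algebraic, or from the elementary fact that an arrangement of finitely many circles partitions the plane into finitely many faces, of which your set is a union. Neither point is a gap in substance, only in exposition.
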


An elementary property of the Poisson process is that, almost surely, the set of all coordinates of its points %of $\Pcal$ 
will be algebraically independent (no subset is the a solution to a non-trivial polynomial equation with integer coefficients).
In particular, all $z$-coordinates are distinct, no two of the disks $D_i$ and $D_j$ are tangent, and no point lies on 
the boundary of more than two $D_i$s.
From this, together with Lemma~\ref{lem:bordenave}, it can be seen that almost surely: %

\begin{quote}
\begin{itemize}
\item[\CC{1}] Each cell has non-empty interior and is 
bounded by finitely many circle segments (this also includes the case where the boundary is a single circle), and;
\item[\CC{2}] Each point of the plane is either in the interior of some cell, on the boundary of exactly two cells
or on the boundary of exactly three cells. 
\end{itemize}%
\end{quote}%
See Figure~\ref{fig:dissection} for a depiction.
The points where three cells meet together with the circle segments separating adjacent cells can be viewed as an infinite three-regular plane graph.

\begin{figure}[h!]
\begin{center}
\includegraphics[trim=35 55 55 50,clip,width=7cm]{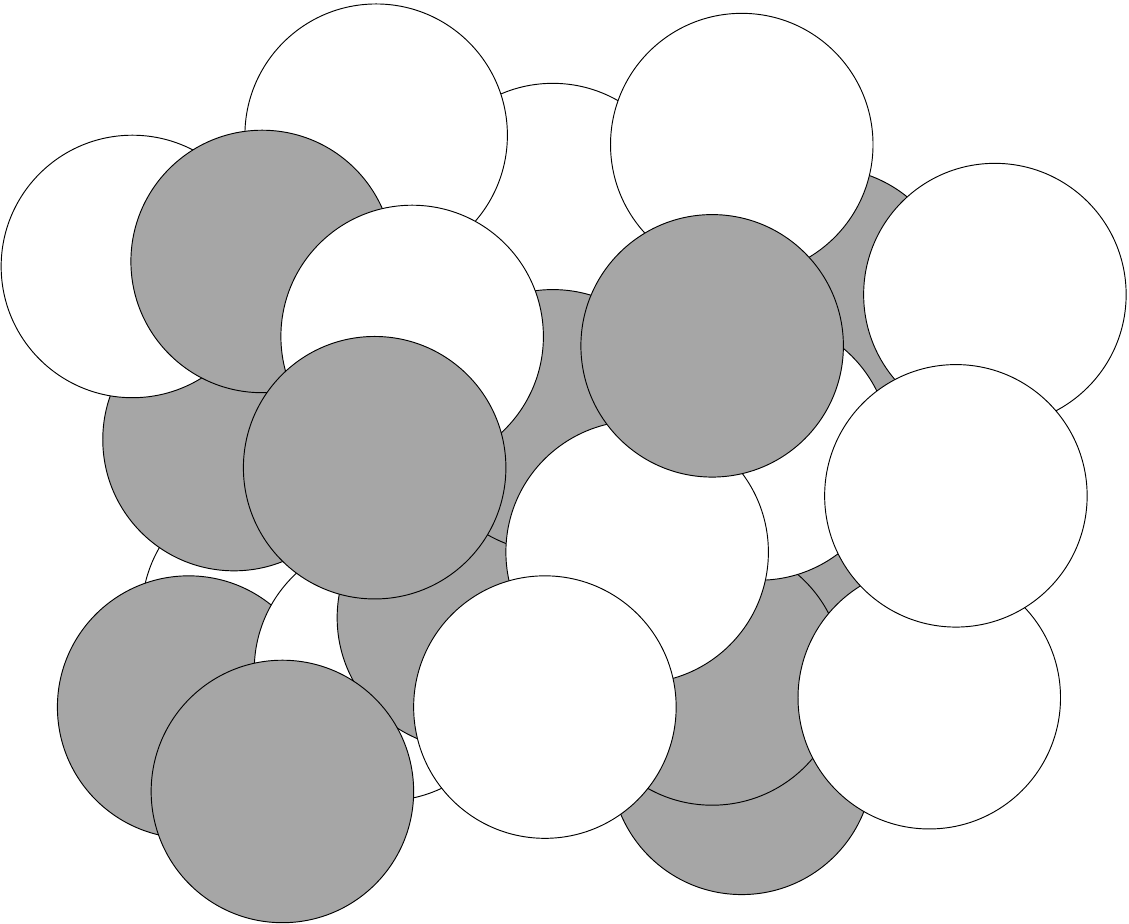}
\end{center}
\vspace{-.5cm}
\caption{A close-up of a realization of the confetti process. Each cell is bounded by finitely many circle
segments. There are points on the common boundary of three cells, but not of four.\label{fig:dissection}}
\end{figure}

% 
% ``corner'' of a cell exactly three cells will meet. 
% (A corner is an intersection point of $\partial D_i$ and $\partial D_j$ for some $i\neq j$.)

Let $\Pb$ be the set of points of the Poisson process $\Pcal$ that receive a black disk, and let $\Pw \subseteq \Pcal$ denote those
points that receive a white disk. By standard properties of the Poisson process (see again~\cite{KingmanBoek}), $\Pb$ and $\Pw$ 
are {\em independent} Poisson processes with intensities $\lb := p \lambda$ and $\lw := (1-p)\lambda$, respectively, on $\eR^2 \times (-\infty,0]$.
Conversely, we can start with two independent Poisson processes $\Pb$ and $\Pw$ of intensities $\lb$ resp.~$\lw$ 
on $\eR^2 \times (-\infty,0]$. If we now center black disks on the points of $\Pb$ and white disks on the points of $\Pw$ 
then the situation is indistinguishable from the original setup with parameters $\lambda := \lb+\lw$ and 
$p := \lb / (\lb + \lw)$.
We will work with both settings in the paper, depending on which is more convenient at the time.
Sometimes we will use the notation $\Pee_{\lb, \lw}( . )$ to emphasize that we are working in the second 
setting (and to specify the values of $\lb, \lw$).

Formally speaking, we can say that the pair $(\Pb, \Pw)$ takes values in the set $\Omega$ whose elements are 
pairs $(\ob, \ow)$ of countable subsets of the lower halfspace $\eR^2 \times (-\infty, 0]$.
We will call a such pair $\omega = (\ob, \ow)$ a {\em configuration}.
%Thus configurations simply represent realizations of the processes $\Pb, \Pw$. 
%In other words, a configuration is a possible outcome for the random experiment that generates $\Pb, \Pw$.
A configuration specifies all the relevant information about a particular realization of the confetti model. 

We say that an event $E$ is {\em black increasing} if it is preserved under the addition of black points and the removal of white points.
That is, if we have a configuration $\omega = (\ob, \ow)$ for which $E$ holds, and we set $\ob' := \ob \cup A, 
\ow' := \ow \setminus B$ for arbitrary countable sets $A, B \subseteq \eR^2 \times (-\infty, 0]$ then
$E$ holds for the configuration $(\ob', \ow')$.
In this article we will rely heavily on the following generalization of Harris' inequality
due to Hirsch, which itself is based on a similar result of Bollob\'as and Riordan~\cite{BollobasRiordanVoronoi}.

\begin{lemma}[\cite{HirschArxiv}, Lemma 1]\label{lem:HarrisHirsch}
If $E_1, E_2$ are two black-increasing events then
\[ \Pee_p( E_1 \cap E_2 ) \geq \Pee_p(E_1) \Pee_p(E_2), \]
\noindent
for all $p \in [0,1]$.
\end{lemma}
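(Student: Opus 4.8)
The statement to be proved is Lemma~\ref{lem:HarrisHirsch}, a Harris-type correlation inequality for black-increasing events in the confetti model. Since the lemma is explicitly attributed to Hirsch (and ultimately Bollobás--Riordan), the proof should be a discretize-and-pass-to-limit argument reducing to the classical Harris inequality on a finite product of Bernoulli spaces.

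Here's my plan.

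\begin{proofof}{Lemma~\ref{lem:HarrisHirsch} (sketch)}
The strategy is to approximate the two independent Poisson processes $\Pb,\Pw$ by finitely many independent Bernoulli random variables and invoke the classical Harris (FKG for product measures) inequality there. Fix a large $R>0$ and a small mesh size, and dissect the slab $[-R,R]^2\times[-R,0]$ into finitely many congruent half-open cubes $Q_1,\dots,Q_N$ (together with the ``outside'' region, which for now we ignore and treat as empty). For each cube $Q_j$ introduce two Bernoulli variables: $X_j^{\rm b}$, the indicator that $\Pb\cap Q_j\neq\emptyset$, and $X_j^{\rm w}$, the indicator that $\Pw\cap Q_j\neq\emptyset$; by the independence and complete-independence properties of the Poisson process, all $2N$ of these are mutually independent, with $\Pee[X_j^{\rm b}=1]=1-e^{-\lb\vol(Q_j)}$ and similarly for white. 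From the configuration $(\ob\cap([-R,R]^2\times[-R,0]),\ow\cap(\dots))$ one reads off a point in $\{0,1\}^{2N}$ by recording which cubes contain a black point and which contain a white point; conversely one obtains a (coarsened) configuration by, say, placing a black/white point at the centre of each occupied cube. The first task is to show the events $E_1,E_2$ are well approximated by events measurable with respect to these Bernoulli variables: this is where monotonicity is used.

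Concretely, because $E_i$ is black-increasing, the coarsened configuration obtained by the ``centre of cube'' rule lies on the ``safe side'': if the true configuration is in $E_i$ then making the black set larger and the white set smaller preserves $E_i$, so one builds a finite Bernoulli up-set $A_i^{(R,\text{mesh})}\subseteq\{0,1\}^{2N}$ (up in black coordinates, down in white coordinates; equivalently, up-set after relabelling white coordinates by their complements) with $\Pee_p(E_i)\le \Pee_{\overline p}[(X)\in A_i]$ or the reverse inequality, and, crucially, $\Pee_{\overline p}[(X)\in A_i]\to\Pee_p(E_i)$ as $R\to\infty$ and mesh $\to 0$. Getting this convergence is the technical heart of the argument: one needs that, outside a set of small probability, whether $E_i$ holds is determined by the restriction of the configuration to a bounded region and is ``stable'' under the small perturbation of moving each point to the centre of its cube. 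For the crossing events we ultimately care about this is essentially Lemma~\ref{lem:bordenave} together with properties \CC{1}--\CC{2}: almost surely the cell structure in a bounded window depends on finitely many confetti, and a generic configuration is not on the boundary of $E_i$. I expect this approximation step --- making precise ``$E_i$ is approximable by finite-dimensional monotone events'' and proving the convergence of probabilities --- to be the main obstacle; everything else is soft.

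Once the approximation is in place, the conclusion is immediate. Since $A_1^{(R,\text{mesh})}$ and $A_2^{(R,\text{mesh})}$ are both up-sets (in the product order on $\{0,1\}^{2N}$, after the white-coordinate relabelling) in a product of two-point probability spaces, the classical Harris inequality gives
\[
\Pee_{\overline p}\big[(X)\in A_1\cap A_2\big]\ \ge\ \Pee_{\overline p}\big[(X)\in A_1\big]\cdot\Pee_{\overline p}\big[(X)\in A_2\big].
\]
Because $E_1\cap E_2$ is again black-increasing, the same approximation scheme applies to it, and taking $R\to\infty$, mesh $\to 0$ along a suitable sequence turns the displayed inequality into $\Pee_p(E_1\cap E_2)\ge\Pee_p(E_1)\Pee_p(E_2)$, which is the assertion. (For a fully detailed version of this discretization argument one may consult Hirsch~\cite{HirschArxiv}, Lemma 1, and Bollobás--Riordan~\cite{BollobasRiordanVoronoi}.)
\end{proofof}
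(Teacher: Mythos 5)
The paper does not prove this statement at all: it is quoted verbatim from Hirsch~\cite{HirschArxiv} (Lemma~1 there), who adapts the Bollob\'as--Riordan FKG argument for Voronoi percolation. So there is no in-paper proof for your sketch to be compared against; I can only comment on the sketch on its own terms.

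Your outline is pointed in the right direction --- discretize the two Poisson processes into independent Bernoulli indicators, apply Harris on the product cube, and pass to the limit --- but the approximation step, which you correctly flag as the crux, is handled in a way that doesn't work. The ``centre-of-cube'' coarsening is not monotone for black-increasing events: replacing the black points in a cube by a single black point at the centre both \emph{removes} black points and moves one, and removal of black points can destroy the event, so the claim that the coarsened configuration ``lies on the safe side'' is false. A genuinely one-sided discretization is precisely the $E^{(k)}$ construction of Section~\ref{sec:DiscrAppr} (require $E$ for \emph{every} $k$-perturbation), which gives a bona fide up-set with $E^{(k)}\subseteq E$; but then the convergence $\Pee(E^{(k)})\to\Pee(E)$ is exactly what Lemma~\ref{lem:limk} establishes for crossing events, using the cell structure (\CC{1}--\CC{2}, Lemma~\ref{lem:bordenave}), and it does \emph{not} hold for an arbitrary black-increasing $E$. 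So your sketch, as written, does not yield the lemma in the stated generality.

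The standard fix is to abandon approximating \emph{events} in favour of conditional \emph{expectations}. Let $\Fcal_k$ be the $\sigma$-algebra generated by the $2N$ cube indicators. A coupling argument (for two Bernoulli vectors $\xi\le\xi'$ in the black-increasing order, realise the conditional Poisson processes so that the $\xi'$-configuration has more black points and fewer white points than the $\xi$-configuration) shows that $\Pee(E_i\mid\Fcal_k)$ is a non-decreasing function of the Bernoulli vector. Harris' inequality for non-decreasing \emph{functions} on a finite product of two-point spaces then gives
$\Ee\bigl[\Pee(E_1\mid\Fcal_k)\Pee(E_2\mid\Fcal_k)\bigr]\ge\Pee(E_1)\Pee(E_2)$,
and martingale convergence $\Pee(E_i\mid\Fcal_k)\to\Ie_{E_i}$ a.s.\ (as $R\to\infty$, mesh $\to 0$) lets you pass to $\Pee(E_1\cap E_2)$ on the left. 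Equivalently, one can quote the FKG inequality for Poisson processes twice --- conditional on $\Pw$ both indicators are non-decreasing in $\Pb$, and the resulting conditional probabilities are non-increasing in $\Pw$ --- and never discretize at all. Either route closes the gap; the version you wrote does not.
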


Let $R \subseteq \eR^2$ be an axis-parallel rectangle.
We say that $R$ has a {\em black, horizontal crossing} if there is a polygonal curve
$\gamma \subseteq R$ between a point on the left edge of $R$ and a point of the right edge of $R$, such that
all points of $\gamma$ are black.
Similarly we say $R$ has a {\em black, vertical crossing} if there is such a curve between the bottom edge and the top edge of $R$, and
we define white horizontal and vertical crossings analogously.
Let us remark that the restriction to polygonal curves is not really a restriction at all: as can be seen from the earlier observations 
in this section, unless a certain event of probability zero holds, whenever there
is a black, continuous (but not necessarily polygonal) curve ``horizontally crossing'' $R$ then there also is a polygonal such curve.
By restricting attention to polygonal curves we avoid having to needlessly deal with topological intricacies in our proofs. %%in Section~\ref{sec:pGreaterHalf}.
In the rest of the paper we will write:

\[ \begin{array}{rcl}
H(R) & := & \{ \text{$R$ has a black, horizontal crossing}\}, \\
%\quad 
V(R) & := & \{ \text{$R$ has a black, vertical crossing}\}. %, \\
%H_{s\times t}^* & := & \{ \text{$[0,s]\times[0,t]$ has a white, horizontal crossing}\}, \\
%\quad V_{s\times t}^* & := & \{ \text{$[0,s]\times[0,t]$ has a white, vertical crossing}\}. 
\end{array} \]

\noindent
For notational convenience we will also write 

\[ 
H_{s\times t} := H( [0,s]\times[0,t] ). %, \quad V_{s\times t} := V( [0,s]\times[0,t] ). 
\]
%A lot of the action in this paper centers around bounding the probabilities of these events.

A key ingredient to our proof of Theorem~\ref{thm:main} is the following result of Hirsch~\cite{HirschArxiv}, whose 
proof is essentially an adaptation of the sophisticated method developed by Bollob\'as and 
Riordan~\cite{BollobasRiordanVoronoi} to settle the critical probability for Voronoi percolation. 

\begin{theorem}[\cite{HirschArxiv}]\label{thm:HirschRSW} 
For every $\rho > 0$ we have that 
\[ \limsup_{s\to\infty} \Pee_{1/2}( H_{\rho s\times s} ) > 0. \]
\end{theorem}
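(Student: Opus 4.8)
The plan is to prove the stronger statement $\liminf_{s\to\infty}\Pee_{1/2}(H_{\rho s\times s})>0$ for every fixed $\rho>0$; this is the confetti analogue of the box-crossing theorem that Bollob\'as and Riordan proved for Voronoi percolation, and essentially all of its content sits in a Russo--Seymour--Welsh-type amplification. The base case is the square: one first checks that $\Pee_{1/2}(H([0,s]\times[0,s]))\ge\tfrac12$ for every $s>0$. Using \CC{1} and \CC{2}, a standard planar-duality argument shows that, almost surely, the square $[0,s]\times[0,s]$ has either a black left-to-right crossing or a white bottom-to-top crossing, and not both (two such curves would have to cross, which is impossible since one is black and the other white). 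Hence $\Pee_{1/2}(H([0,s]\times[0,s]))+\Pee_{1/2}(\text{white vertical crossing of }[0,s]\times[0,s])\ge 1$. At $p=\tfrac12$ the model is invariant under interchanging the two colours and it is invariant under the rotation of the plane by $\tfrac{\pi}{2}$; under the composition of these a white vertical crossing of the square turns into a black horizontal crossing, so the two probabilities above are equal and each is $\ge\tfrac12$. By the rotation symmetry alone, black vertical crossings of squares also have probability $\ge\tfrac12$.

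The heart of the matter -- and the step I expect to be the main obstacle -- is amplifying from squares to $\tfrac32\times1$ rectangles: one shows there is $c'>0$ with $\Pee_{1/2}\big(H([0,\tfrac32 s]\times[0,s])\big)\ge c'$ for all $s$. The scheme is the classical RSW reflection argument, as carried out in the dependent/continuum setting by Bollob\'as and Riordan: write the $\tfrac32 s\times s$ rectangle as $([0,s]\times[0,s])\cup([\tfrac12 s,\tfrac32 s]\times[0,s])$; condition on the lowest black horizontal crossing $\Gamma$ of the first square; reflect in its vertical midline $x=\tfrac12 s$ (which fixes that square); and use the reflection invariance of the model, the $\ge\tfrac12$ bounds from the first step, and the Harris-type inequality of Lemma~\ref{lem:HarrisHirsch} to glue a crossing of the second square onto $\Gamma$. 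The delicate point -- the source of the technical difficulties that forced Hirsch to restrict to square confetti -- is that confetti percolation is not a product measure: the colour of a point is determined by the Poisson process in a radius-$1$ neighbourhood, so conditioning on $\Gamma$ genuinely affects the configuration just above $\Gamma$. Following Bollob\'as and Riordan, one deals with this by exploiting the bounded range of dependence (a thin buffer strip decouples the two sides up to a controllable error) together with monotonicity and Lemma~\ref{lem:HarrisHirsch}; making this rigorous is where the real work lies.

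Granting the $\tfrac32\times1$ bound, longer rectangles follow by gluing overlapping copies. Writing $[0,2s]\times[0,s]=\big([0,\tfrac32 s]\times[0,s]\big)\cup\big([\tfrac12 s,2s]\times[0,s]\big)$, whose intersection is the $s\times s$ square $Q_0=[\tfrac12 s,\tfrac32 s]\times[0,s]$, one checks that a horizontal crossing of each of the two rectangles together with a black \emph{vertical} crossing of $Q_0$ (probability $\ge\tfrac12$ by the first step) forces a horizontal crossing of $[0,2s]\times[0,s]$; all three events are black-increasing, so Lemma~\ref{lem:HarrisHirsch} gives $\Pee_{1/2}(H_{2s\times s})\ge\tfrac12(c')^2$ uniformly in $s$. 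Iterating the same construction -- always overlapping along a full $s\times s$ square, which keeps the gluing crossing in the easy direction and is the reason for first amplifying to aspect ratio $\tfrac32$ -- yields $\inf_{s>0}\Pee_{1/2}(H_{ks\times s})>0$ for every integer $k\ge1$. Finally, for arbitrary $\rho>0$ put $k=\lceil\rho\rceil$ and note that a horizontal crossing of the sub-rectangle $[0,\rho s]\times[0,\rho s/k]$ (which has aspect ratio $k$ and fits inside $[0,\rho s]\times[0,s]$ since $\rho s/k\le s$) is a horizontal crossing of $[0,\rho s]\times[0,s]$, so $\Pee_{1/2}(H_{\rho s\times s})\ge\Pee_{1/2}(H_{ks'\times s'})$ with $s'=\rho s/k\to\infty$. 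This gives $\liminf_{s\to\infty}\Pee_{1/2}(H_{\rho s\times s})>0$, which is stronger than the stated $\limsup>0$; as indicated, essentially the entire difficulty is concentrated in the RSW step, the confetti counterpart of Bollob\'as and Riordan's Voronoi crossing lemma.
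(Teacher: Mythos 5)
This theorem is not proved in the paper; it is imported from Hirsch~\cite{HirschArxiv}, who adapted Bollob\'as and Riordan's Voronoi argument, so your attempt has to stand on its own. The parts you actually prove are fine: the self-duality/colour-swap/rotation argument giving $\Pee_{1/2}(H([0,s]\times[0,s]))\geq\tfrac12$, and the gluing of overlapping rectangles along a shared square via Lemma~\ref{lem:HarrisHirsch}. But the central RSW step, amplifying from squares to $\tfrac32 s\times s$ rectangles, is left unproved. You sketch the classical recipe (condition on the lowest black crossing, reflect in the vertical midline, use Harris to glue a crossing on the other side) and then explicitly write that ``making this rigorous is where the real work lies.'' In the confetti model this is not a detail to defer: conditioning on the lowest crossing genuinely changes the law of the configuration just above it, the radius-one dependence defeats the naive buffer-strip decoupling you allude to, and this obstacle is exactly what forced Bollob\'as and Riordan to invent a substantially more intricate argument, which Hirsch then had to adapt and, even so, only carried through for square-shaped confetti. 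Identifying the right difficulty is not the same as overcoming it; as written, the key step is a genuine gap and the rest of the proposal is scaffolding around it.

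A second overclaim compounds this. You assert $\liminf_{s\to\infty}\Pee_{1/2}(H_{\rho s\times s})>0$, and in fact $\inf_{s>0}\Pee_{1/2}(H_{\rho s\times s})>0$. The theorem is deliberately stated with $\limsup$, because the Bollob\'as--Riordan/Hirsch mechanism delivers only that: in this dependent continuum setting their RSW amplification produces good $\tfrac32\times1$ crossing bounds along some sequence of scales $s_n\to\infty$, not uniformly in $s$, and the subsequent gluing inherits that restriction. So even if you filled in the RSW step along the lines you indicate, the argument you are following would give exactly the stated $\limsup$ and not the stronger uniform bound you present as a free by-product.
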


\section{Discrete approximations to crossing events\label{sec:DiscrAppr}}

For $k \in \eN$, we dissect $[-k,k]^2\times [-k,0]$ into axis-parallel cubes of sidelength $2^{-k}$ in the obvious way.
We denote the collection of cubes obtained in this way, together with the set
$\left(\eR^2\times(-\infty,0]\right) \setminus \left([-k,k]^2\times [-k,0]\right)$, as $\Ccal_k$.
We say that a configuration $\omega \in\Omega$ 
is a {\em $k$-perturbation} of another configuration $\omega' \in \Omega$ 
%{\em hit the same parts of $\Ccal_k$} 
if $c \cap \ob \neq \emptyset \Leftrightarrow c\cap \ob' \neq \emptyset$ and $c \cap \ow \neq \emptyset 
\Leftrightarrow c \cap \ow' \neq \emptyset$, for all $c\in\Ccal_k$.
For $E$ an arbitrary event and $k \in \eN$ we define the event $E^{(k)}$ as follows

\[ 
 E^{(k)} := \{ \omega \in \Omega :  
\text{ for every $k$-perturbation $\omega'$ of $\omega$, we have $\omega'\in E$}\} \]

\noindent
In other words, $E^{(k)}$ holds if $E$ holds for every configuration that can be obtained from the 
current realization of $(\Pb,\Pw)$ by wiggling, adding or removing points in such a way that the 
same parts of $\Ccal_k$ are hit by black, resp.~white, points.
Obviously we have, for every $k$ and every event $E$, that $E^{(k)} \subseteq E$.
Also note that $E^{(k)} \subseteq E^{(k+1)}$ for all $k$ since the partition $\Ccal_{k+1}$ refines $\Ccal_k$.

\begin{proposition}\label{prop:kmain}
For every $\eps > 0$, every $\lb,\lw>0$ and every bounded set $A \subseteq \eR^2$, there exists a $k_0 = k_0(\eps,\lb,\lw,A)$ such that 
\[ 
\sup_{R \subseteq A, \atop \text{$R$ axis-parallel rectangle}}%
{\Big |}\Pee_{\lb,\lw}[H(R)] - \Pee_{\lb,\lw}[ H^{(k)}(R) ]{\Big |} < \eps, 
\]
for all $k\geq k_0$. %(Here the supremum is over all axis-parallel rectangles $R$.) 
\end{proposition}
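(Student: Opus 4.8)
The plan is to show that, as $k\to\infty$, the "robust crossing" events $H^{(k)}(R)$ exhaust $H(R)$ up to a null set, uniformly over the (compact family of) rectangles $R\subseteq A$. Since $H^{(k)}(R)\subseteq H(R)$ always, the quantity in the supremum is $\Pee_{\lb,\lw}[H(R)\setminus H^{(k)}(R)]$, and I must bound this by $\eps$ for all large $k$, uniformly in $R$. The key structural input is the almost-sure picture recorded in \CC{1} and \CC{2} together with Lemma~\ref{lem:bordenave}: the plane is partitioned into finitely many cells inside any bounded region, each cell has nonempty interior and piecewise-circular boundary, and triple points are isolated. On this good event, a black horizontal crossing of $R$ can be "pushed into the interiors of black cells": whenever $H(R)$ holds there is a polygonal black crossing that meets the cell-boundary graph only transversally, crosses only finitely many cell-edges, and stays a positive distance away from all triple points. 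Such a crossing is \emph{stable}: it survives any sufficiently small perturbation of the configuration, because the combinatorial structure of the cell complex in a neighbourhood of $R$ is locally constant under small movements/additions/removals of Poisson points that do not change which cells are black. So, heuristically, if $H(R)$ holds and $k$ is large enough that the cube-mesh $2^{-k}$ is finer than the relevant "stability radius" of that crossing, then every $k$-perturbation also has the crossing, i.e. $H^{(k)}(R)$ holds.

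To turn this into a uniform statement I would argue as follows. First fix $\eps>0$. Define, for a configuration $\omega$ in the good event, $\rho(\omega,R)>0$ to be (a lower bound on) how far one must perturb to destroy a fixed stable black crossing of $R$ — formally, the minimum over the finitely many relevant cell-edges and triple points in a neighbourhood of $A$ of the clearances involved. A cleaner route avoids defining $\rho$ pointwise: I would instead show directly that $\bigcap_k \big(H(R)\setminus H^{(k)}(R)\big)$ is contained in the null event where \CC{1}, \CC{2}, or Lemma~\ref{lem:bordenave} fails, \emph{plus} the event that $R$ has a black crossing but no stable one. The latter is also null: on the good event, if a black horizontal crossing exists at all, a standard transversality/compactness argument produces one avoiding triple points and meeting cell-edges finitely and transversally (perturb the polygonal curve slightly inside the open black region), and any such curve is stable. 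Hence $\Pee_{\lb,\lw}[H(R)\setminus H^{(k)}(R)]\downarrow 0$ as $k\to\infty$, pointwise in $R$.

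For the \emph{uniformity} over $R\subseteq A$, the cleanest device is monotonicity of $E^{(k)}$ in $k$ (noted in the text: $E^{(k)}\subseteq E^{(k+1)}$) combined with a covering/quantization argument on the rectangle parameter. A rectangle $R\subseteq A$ is described by four real coordinates lying in a compact set; I would replace the continuum of rectangles by a finite net of "grid rectangles" with corners on the lattice $2^{-m}\Zed^2$ for a fixed large $m$, and show (again using \CC{1}–\CC{2}, a.s.) that both $H(R)$ and $H^{(k)}(R)$ are insensitive to infinitesimal changes of the corners of $R$ once $k$ is large relative to $m$ — because a stable crossing of a slightly larger grid rectangle restricts to a crossing of $R$ and vice versa up to a null event, and the crossing stays away from the edges of $R$. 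This reduces the supremum over all $R\subseteq A$ to a \emph{finite} maximum over grid rectangles; for each of those, pointwise convergence and the increasing nesting in $k$ give a single threshold $k_0$, and we take the max. Thus the plan has three ingredients: (i) the a.s.\ geometric regularity, already available as \CC{1}, \CC{2}, Lemma~\ref{lem:bordenave}; (ii) a stability lemma — a black crossing can be taken in the interiors of black cells, transversal to the cell graph, hence robust to small perturbations; (iii) a reduction from the continuum of rectangles to a finite net.

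The \textbf{main obstacle} is ingredient (ii), and in particular making "small perturbation" precise in the Poisson/configuration language. A $k$-perturbation can \emph{add} black points arbitrarily high (in $z$) and \emph{remove} white points, so the cell complex can change drastically \emph{away} from the crossing — the content is that it does not change \emph{along} the crossing. I would handle this by localizing: a point $q\in R$ lies in a black cell iff the highest disk over $q$ is black; this depends only on Poisson points whose disks cover $q$, i.e. within horizontal distance $1$ of $q$, and on their relative $z$-order. A $k$-perturbation preserves, for each cube $c$ of $\Ccal_k$, whether $c$ contains a black (resp.\ white) point; if the crossing $\gamma$ stays, at every point, at distance $>2\cdot 2^{-k}\sqrt{3}$ (say) from the boundary of the black region \emph{and} every cube meeting the relevant neighbourhood contains at most one Poisson point of the process — which happens with probability $\to 1$, or rather: on a full-measure event the finitely many relevant points are eventually separated by the mesh — then the highest-disk-color over every point of $\gamma$ is unchanged. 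So the genuinely delicate step is a slightly careful "mesh is finer than all relevant clearances and separations" argument; I expect it to be technical but routine, and it is exactly the kind of estimate the paper elsewhere treats as elementary. Everything else (compactness of the rectangle family, nesting in $k$, the a.s.\ geometric facts) is bookkeeping.
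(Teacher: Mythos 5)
Your pointwise step (ingredient (ii): on a full-measure event, a black crossing of a fixed $R$ can be taken stable, so that eventually every $k$-perturbation preserves it) is exactly the content of the paper's Lemma~\ref{lem:limk}, and your open-cover/compactness argument matches the paper's proof of that lemma essentially verbatim. The divergence is in how uniformity over $R\subseteq A$ is obtained. The paper sets $f(R)=\Pee_{\lb,\lw}[H(R)]$ and $f_k(R)=\Pee_{\lb,\lw}[H^{(k)}(R)]$, records that $f_k$ is monotone in each rectangle coordinate, that $f_k\uparrow f$ pointwise, and that $f$ is continuous (Lemma~\ref{lem:cts2}), then applies a Dini-type theorem (Lemma~\ref{lem:dini}). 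You instead propose a finite net of grid rectangles plus a sandwich by the monotonicity of $H(R)$ and $H^{(k)}(R)$ in the four corners. That route can be made to work, but it is not a shortcut: it still needs precisely the continuity of $R\mapsto\Pee_{\lb,\lw}[H(R)]$ to control $\Pee[H(R^{-})]-\Pee[H(R^{+})]$ for adjacent net rectangles, since that difference is \emph{not} controlled by pointwise convergence of $f_k$ at the net points alone.

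This is where your plan has a genuine gap. You justify the reduction to a finite net by asserting that ``both $H(R)$ and $H^{(k)}(R)$ are insensitive to infinitesimal changes of the corners of $R$'' almost surely. That is false as stated. For a fixed configuration $\omega$ and fixed $k$, both $H(R)$ and $H^{(k)}(R)$ can flip under an arbitrarily small change of $R$: widening the rectangle by any positive amount requires the crossing to reach strictly farther, which need not hold, and the indicator $R\mapsto\mathbf{1}_{H(R)}(\omega)$ has a jump at the exact value of the maximal horizontal reach of the black cluster through the left edge. The correct statement is probabilistic, not pathwise: the probability that this maximal reach takes any \emph{prescribed} value is zero, which yields continuity of $R\mapsto\Pee_{\lb,\lw}[H(R)]$. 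Proving this is the content of the paper's Lemma~\ref{lem:cts2}, and the argument there is not purely ``bookkeeping'': it requires showing that the relevant collection of connected pieces of black cells intersecting a horizontal strip is a.s.\ countable (using \CC{1} and Lemma~\ref{lem:bordenave}) and then invoking translation invariance in law to kill the probability of any fixed coordinate being hit. If you add this continuity lemma to your outline, your net argument is a hand-rolled version of the Dini-type Lemma~\ref{lem:dini}, and the two proofs become essentially identical in substance; as written, the uniformity step is missing a necessary ingredient and misattributes it to an a.s.\ geometric stability that does not hold.
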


The rest of this section is devoted to the proof of the last proposition.
The proof is relatively straightforward and %the rest of this section 
could be skipped in a first reading of the paper. % -- provided of course the reader is willing to believe that the proposition holds.

%We begin by establishing some intermediate observations needed in the proof of Proposition~\ref{prop:kmain}.
For technical reasons it is convenient to also treat horizontal and vertical line segments and single points 
as rectangles in the remainder of this section. Of course, when $R$ is a vertical line segment, then $H(R)$ holds if at least one point of $R$ is 
black, and when $R$ is a horizontal line segment then $H(R)$ holds if all points of $R$ are black.
%that we have 
%placed in separate lemmas for convenience.

\begin{lemma}\label{lem:limk}
For every $\lb,\lw > 0$ and every axis-parallel rectangle $R$ we have that 
\[ \Pee_{\lb,\lw}[ H(R)] = \lim_{k\to\infty} \Pee_{\lb,\lw}[ H^{(k)}(R)]. \]
% \noindent
% where we write $E=H(R)$ for convenience.
\end{lemma}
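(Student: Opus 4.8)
The plan is to exploit the two facts we already have at our disposal: that $H^{(k)}(R) \subseteq H^{(k+1)}(R) \subseteq H(R)$ for every $k$ (noted just after the definition of $E^{(k)}$), and that, almost surely, the confetti picture inside any bounded region is "tame" in the sense of \CC{1} and \CC{2} together with Lemma~\ref{lem:bordenave}. Since the events $H^{(k)}(R)$ increase with $k$ and are all contained in $H(R)$, the limit $\lim_k \Pee_{\lb,\lw}[H^{(k)}(R)]$ exists and is at most $\Pee_{\lb,\lw}[H(R)]$. So the whole content of the lemma is the reverse inequality, which by monotone convergence amounts to showing
\[
\Pee_{\lb,\lw}{\Big[} H(R) \setminus \bigcup_{k} H^{(k)}(R) {\Big]} = 0.
\]
In words: almost surely, whenever $R$ has a black horizontal crossing, there is some finite resolution $k$ at which \emph{every} $k$-perturbation of the configuration still has a black horizontal crossing of $R$.

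\textbf{Key steps.}
First I would restrict to the almost sure event $\Gcal$ on which Lemma~\ref{lem:bordenave} holds, the coordinates of $\Pcal$ are algebraically independent, and consequently \CC{1}--\CC{2} hold; it suffices to prove the claim pointwise on $\Gcal \cap H(R)$. Second, fix such a configuration $\omega$ and a black polygonal crossing $\gamma \subseteq R$. The curve $\gamma$ is compact, so it meets only finitely many cells, and it lies in the interior of the union of the black cells it visits together with the finitely many circular boundary arcs it crosses transversally. The idea is to perturb $\gamma$ slightly — push it off the finitely many boundary arcs it touches, and if necessary reroute it — to obtain a black polygonal crossing $\gamma'$ that is \emph{strictly interior} to the black region, i.e.\ there is an $\eta > 0$ such that the $\eta$-neighbourhood of $\gamma'$ is entirely black. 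Here one uses that the relevant disk boundaries form a locally finite family of circles (Lemma~\ref{lem:bordenave}) so that a uniform positive "wiggle room" $\eta$ exists along the compact curve $\gamma'$. Third, and this is the heart of the matter, one observes that the colour of every point in a bounded region is determined by a \emph{finite} set of confetti disks (again by Lemma~\ref{lem:bordenave}: a bounded set meets finitely many cells, hence is covered by finitely many "topmost" disks), and each such disk is pinned down — in its centre and in the fact that it is topmost at the relevant points — by a configuration of points of $\Pcal$ in a bounded region, all of whose coordinates are algebraically independent and hence satisfy only \emph{strict} inequalities among the finitely many polynomial relations that encode "$D_i$ covers the point", "$z_i > z_j$", etc. Therefore there is $\delta > 0$ such that moving, adding, or deleting points of $\Pb,\Pw$ in a way that does not change which cube of $\Ccal_k$ contains a black (resp.\ white) point — for $k$ large enough that $2^{-k} < \delta$ and also $2^{-k} < \eta$ — cannot change the colour of any point of the $\eta/2$-neighbourhood of $\gamma'$: the only disks that matter are those whose centres lie in cubes that, for $k$ large, are so small relative to $\delta$ that the "topmost disk covering $x$" is read off from the \emph{occupancy pattern} of $\Ccal_k$ rather than from exact coordinates. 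Consequently $\gamma'$ remains a black crossing in every $k$-perturbation, i.e.\ $\omega \in H^{(k)}(R)$ for this $k$. Finally, choosing $k$ accordingly gives $\omega \in \bigcup_k H^{(k)}(R)$, completing the pointwise argument; taking expectations and using monotone convergence on the increasing sequence $H^{(k)}(R)$ then yields the equality.

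\textbf{Main obstacle.}
The routine parts are the measure-theoretic skeleton (monotonicity, $E^{(k)}\subseteq E$, monotone convergence) and the reduction to the almost sure tame event. The genuinely delicate step is making precise the claim that "for $k$ large, the colour of every point near $\gamma'$ is a function of the $\Ccal_k$-occupancy pattern alone, stable under $k$-perturbations." One has to argue that: (i) only finitely many disks influence the colours in a neighbourhood of $\gamma'$; (ii) each of these disks is "robustly topmost" over the relevant region by a strict margin (using algebraic independence of coordinates to rule out ties and tangencies), so there is a uniform $\delta$; and (iii) once $2^{-k}<\delta$, a $k$-perturbation can reposition centres only within a single small cube, can only add centres in cubes already occupied by the same colour, and can only delete centres from cubes that will still be occupied — and none of these operations can flip which disk is topmost at a point of the black $\eta/2$-tube, nor introduce a higher white disk there. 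Getting the quantifiers in the right order — first fix $\omega$ and $\gamma$, then extract $\eta$, then extract $\delta$ from finitely many strict inequalities, then pick $k$ — is where the care is needed, and it is presumably why the author calls the proof "relatively straightforward" but still spells it out.
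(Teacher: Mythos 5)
Your proposal is correct and follows essentially the same route as the paper: reduce by monotonicity to showing that, almost surely, a configuration in $H(R)$ lies in some $H^{(k)}(R)$, then exploit compactness of a black crossing $\gamma$ and the strict slack in the finitely many relevant geometric inequalities (algebraic independence rules out ties and tangencies) to extract a uniform resolution $k$ at which every $k$-perturbation preserves the crossing; the paper organizes this as a local $\eps$-margin at each $q\in\gamma$ followed by a finite subcover, while you extract a global $\eta$-tube first, but these are the same compactness argument. The one detail your sketch does not address is that $\Ccal_k$ contains one \emph{unbounded} element, inside which a $k$-perturbation may freely add, delete, or move points; one must therefore also take $k$ large enough that the topmost disk covering each point of $\gamma$ has its centre in a finite cube, which is exactly what the paper's side condition $k_0(q)\geq\norm{p_i}+1+\eps$ is for.
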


\begin{proof}
For notational convenience, let us write $E := H(R)$.
As already noted, we have $E^{(1)} \subseteq E^{(2)} \subseteq \dots \subseteq E$.
This gives

\[ \Pee_{\lb,\lw}(E) \geq \Pee_{\lb,\lw}\left( \bigcup_{k=1}^\infty E^{(k)} \right) = \lim_{k\to\infty} \Pee_{\lb,\lw}(E^{(k)}). \]

\noindent
It remains to show the reverse inequality.
To achieve this, it suffices show that for all configurations $\omega \in E$ except for a set of configurations of measure zero, 
we have $\omega \in \bigcup_{k=1}^\infty E^{(k)}$.

Let us thus fix an arbitrary configuration $\omega \in E$. 
It is convenient to enumerate $\ob\cup\ow$ as $\{p_1, p_2, \dots\}$ and to write $p_i = (x_i,y_i,z_i)$.
Discarding a set of configurations of total measure zero, we can assume without loss of generality that 
every bounded set contains finitely many points of $\ob\cup\ow$, that all the coordinates of all the $p_i$-s are distinct and that 
the properties~\CC{1} and~\CC{2} hold.
%By the observations that follow Lemma~\ref{lem:bordenave}, we can also assume without loss of generality (again discarding a 
%set of configurations of total probability zero) 
Hence, there is a black horizontal crossing $\gamma$ of $R$ that does not pass 
through any ``corners'' of cells (i.e.~points on the boundary  of three or more cells), and does not pass through any point on the 
common boundary of a black and a white cell.
Let us fix such a crossing $\gamma$.

Consider a point $q\in\gamma$. Let us suppose first that $q$ lies in the interior of some (black) cell.
That $q$ lies in the interior of a black cell means that the highest $p_i$ such that $\norm{q-\pi(p_i)} \leq 1$ belongs to $\ob$, and 
moreover $\norm{q-\pi(p_i)} < 1$.  (Here of course $\pi(x,y,z) := (x,y)$ denotes the projection onto the plane.)
From our assumptions on $\omega$, we see that there is a $\eps > 0$ such that
$\norm{q-\pi(p_i)} < 1-\eps$ and for every $j\neq i$ we have either $z_j < z_i - \eps$ or $\norm{q-\pi(p_j)} > 1+\eps$.
(Otherwise there are either two points with equal $z$-coordinates, or infinitely many points in some bounded region.)
Let us now fix an integer $k_0(q)$ satisfying $2^{-k_0(q)} < \eps/1000$ and $k_0(q) \geq  \norm{p_i}+1+\eps$.
Then we have that for every $k\geq k_0(q)$ and every $k$-perturbation $\omega'$ of $\omega$, the point $q$ together with 
all points of the plane at distance $<2^{-k_0(q)}$ from $q$ will be coloured black 
in the colouring of the plane defined by $\omega'$.

Suppose now that $q$ lies on the common boundary of two black cells (but not on a corner).
This means that if $p_i$, respectively $p_{j}$, are the highest, respectively second highest, points 
such that $\norm{q-\pi(p_i)},\norm{q-\pi(p_{j})} \leq 1$ then $p_{i}, p_{j} \in \ob$ and 
$\norm{q-\pi(p_i)}=1$ and $\norm{q-\pi(p_{j})} < 1$. 
Similarly to before, we see that there exist a $k_0(q)$ such that, for every $k\geq k_0(q)$ and every $k$-perturbation $\omega'$ 
of $\omega$, the point $q$ together with all points of the plane at distance $<2^{-k_0(q)}$ from $q$ will be coloured black under $\omega'$.

Next, let us observe that the disks $\{ B(q,2^{-k_0(q)}) : q \in \gamma \}$ form an open cover of the 
compact set $\gamma$. %(it is compact since it is the image of $[0,1]$ under a continuous map).
Hence there exists a finite subcover $\{ B(q_1,2^{-k_0(q_1)}), \dots, B(q_N,2^{-k_0(q_N)})\}$ that still covers $\gamma$.
Setting $k := \max( k_0(q_1), \dots, k_0(q_N) )$, we see that 
in every $k$-perturbation of $\omega$, the entire curve $\gamma$ is coloured black.
In other words, we have shown that every $\omega \in E$ except for a set of configurations of total measure zero lies in 
some $E^{(k)}$, as required. 
\end{proof}

\begin{lemma}\label{lem:cts2}
For every (fixed) $\lb, \lw > 0$ 
the probability $\Pee_{\lb,\lw}{\big[} H_{s\times t}{\big]}$ is continuous
as a function of $s,t$.
%Let $E$ be an event that only depends on the colours of a bounded set $A \subseteq \eR^2$.
%Then the probability $\Pee_{\lambda_b,\lambda_w}(E)$ is continuous as a function of $\lb,\lw$. 
\end{lemma}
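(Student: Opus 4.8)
The plan is to prove continuity by a squeezing argument between $H(R)$ for slightly smaller and slightly larger rectangles, combined with the discrete approximation from Proposition~\ref{prop:kmain} and the (easier) continuity of $\Pee_{\lb,\lw}[H^{(k)}(R)]$ in the rectangle dimensions. First I would fix $(s_0,t_0)$ with $s_0,t_0>0$ and a target error $\eps>0$, and fix a bounded set $A$ (say a large closed square) containing all rectangles $[0,s]\times[0,t]$ with $(s,t)$ in a neighbourhood of $(s_0,t_0)$. Apply Proposition~\ref{prop:kmain} to obtain $k$ with $\sup_{R\subseteq A}\bigl|\Pee_{\lb,\lw}[H(R)]-\Pee_{\lb,\lw}[H^{(k)}(R)]\bigr|<\eps/3$. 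It then suffices to show $\Pee_{\lb,\lw}[H^{(k)}_{s\times t}]$ is continuous at $(s_0,t_0)$, since the triangle inequality then bounds the oscillation of $\Pee_{\lb,\lw}[H_{s\times t}]$ near $(s_0,t_0)$ by $2\eps/3$ plus the oscillation of the discrete version.

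For the discrete version, the key observation is that whether $H^{(k)}(R)$ holds depends on the configuration $\omega$ only through the finitely many Bernoulli indicators $\bigl(\mathbf 1[c\cap\ob\neq\emptyset],\mathbf 1[c\cap\ow\neq\emptyset]\bigr)_{c\in\Ccal_k}$ that record which cubes of $\Ccal_k$ inside $[-K,K]^2\times[-K,0]$ (with $K$ chosen so that $A\subseteq[-K,K]^2$) contain black, resp.\ white, points — provided $R\subseteq A$. Indeed, by definition $E^{(k)}$ is invariant under $k$-perturbations, so it is measurable with respect to the $\sigma$-algebra generated by these indicators. Hence $\Pee_{\lb,\lw}[H^{(k)}(R)]$ is a finite sum, over the (finitely many) patterns of hit/unhit cubes that force a black horizontal crossing of $R$, of products of probabilities of the form $e^{-\lb\vol(c)}$ or $1-e^{-\lb\vol(c)}$ (and similarly with $\lw$). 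The only way the dimensions $s,t$ enter is in determining, for each pattern of hit cubes, whether that pattern lies in $H^{(k)}(R)$; equivalently, which cubes' projections onto the plane a candidate polygonal black crossing of $[0,s]\times[0,t]$ may use. As $(s,t)$ varies the set of patterns in $H^{(k)}(R)$ can only change when $s$ or $t$ crosses one of the finitely many values at which an edge of $[0,s]\times[0,t]$ passes through a lattice line of $\Ccal_k$; between consecutive such values $\Pee_{\lb,\lw}[H^{(k)}_{s\times t}]$ is literally constant. So the discrete probability is a step function of $(s,t)$, piecewise constant on the grid $2^{-k}\Zed\times 2^{-k}\Zed$, and in particular it has at most a jump discontinuity on lattice lines.

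The remaining (and only mildly delicate) point is that the jumps of the step functions $\Pee_{\lb,\lw}[H^{(k)}_{s\times t}]$ do not survive in the limit, i.e.\ the limit $\Pee_{\lb,\lw}[H_{s\times t}]$ is genuinely continuous rather than merely piecewise continuous. To handle this I would use monotonicity: $H(R)$ is (for inclusion of rectangles) monotone, so $s\mapsto\Pee_{\lb,\lw}[H_{s\times t_0}]$ is non-decreasing and $t\mapsto\Pee_{\lb,\lw}[H_{s_0\times t}]$ is non-increasing (a wider rectangle is easier, a taller one harder to cross horizontally), so one-sided limits exist. To rule out a jump, note that a black horizontal crossing of $[0,s_0]\times[0,t_0]$ that exists for the open rectangle must, by compactness and by properties~\CC{1}--\CC{2} together with the algebraic-independence of the Poisson coordinates, stay a positive distance from the boundary after a slight thinning; more cleanly, for $s<s_0$, $H_{s\times t_0}$ differs from $H_{s_0\times t_0}$ only on the event that every black horizontal crossing of $[0,s_0]\times[0,t_0]$ uses the strip $[s,s_0]\times[0,t_0]$ of the plane, and as $s\uparrow s_0$ these events decrease to the event that every crossing reaches the right edge — but almost surely, if there is a crossing at all, there is one lying in the interior of the cell structure, which stays bounded away from the vertical line $\{x=s_0\}$ except possibly at finitely many points of tangency, an event of probability zero; a symmetric argument works at the top edge and for the other one-sided limits. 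Formally this is exactly the kind of ``wiggle'' argument already carried out in the proof of Lemma~\ref{lem:limk}: one shows that the boundary of $R$ meets the cell structure in a ``generic'' way almost surely, so shrinking or enlarging $R$ infinitesimally changes the crossing event only on a null set. Combining: the one-sided limits of $\Pee_{\lb,\lw}[H_{s\times t}]$ agree, giving continuity. The main obstacle is this last step — controlling the null event where the rectangle's boundary interacts degenerately with the cell structure — but it is handled by the same genericity facts (\CC{1}, \CC{2}, algebraic independence, Lemma~\ref{lem:bordenave}) already available, applied to the boundary lines of $R$ in place of the crossing curve $\gamma$.
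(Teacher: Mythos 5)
The approach has two significant problems.

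First, the reduction to Proposition~\ref{prop:kmain} is circular: in the paper's logical structure, Proposition~\ref{prop:kmain} is itself proved via Dini's theorem, and the very hypothesis of Dini that allows one to conclude uniform convergence is the continuity of $f(x_1,x_2,y_1,y_2)=\Pee_{\lb,\lw}[H(\cdot)]$, which is supplied by Lemma~\ref{lem:cts2}. So you cannot invoke Proposition~\ref{prop:kmain} here. Even setting circularity aside, your plan fails on its own terms: you correctly observe that $(s,t)\mapsto\Pee_{\lb,\lw}[H^{(k)}_{s\times t}]$ is a step function, piecewise constant on cells of the $2^{-k}$-grid. Uniform approximation by step functions does not yield continuity of the limit unless you also show the jump sizes of $f_k$ tend to zero uniformly -- and there is no reason for this a priori; indeed the point of Proposition~\ref{prop:kmain} is that, \emph{once} $f$ is known to be continuous, Dini forces the jumps of $f_k$ to shrink. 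You notice the difficulty, but you cannot escape it along this route: it is a genuine ``chicken-and-egg'' issue and the continuity of $f$ must be established from first principles.

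Second, the direct argument you retreat to is the right instinct but it does not close. The monotonicity is stated backwards (since $H_{s\times t}\subseteq H_{s'\times t}$ for $s'\leq s$, the map $s\mapsto\Pee[H_{s\times t}]$ is non-\emph{increasing}, and $t\mapsto\Pee[H_{s\times t}]$ is non-\emph{decreasing}); this is harmless for the existence of one-sided limits but your explanation (``a wider rectangle is easier, a taller one harder to cross horizontally'') is the reverse of the truth. More substantively, the event you need to kill is not correctly identified: for $s<s_0$, the difference $H_{s\times t_0}\setminus H_{s_0\times t_0}$ is the event that a crossing of the shorter box exists but no crossing of the longer box exists; on this event there are no crossings of $[0,s_0]\times[0,t_0]$ at all, so the phrase ``every black horizontal crossing of $[0,s_0]\times[0,t_0]$ uses the strip $[s,s_0]\times[0,t_0]$'' does not describe it. Finally, the genericity facts \CC{1}, \CC{2}, Lemma~\ref{lem:bordenave} and algebraic independence of the Poisson coordinates are \emph{not by themselves} enough to conclude that the relevant boundary event has probability zero at a \emph{fixed} deterministic $s$. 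The mechanism the paper uses is translation invariance: with $\Bcal_t$ the (a.s.\ countable) family of components of black cells restricted to the horizontal strip, and $x(C)=\sup\{x:(x,y)\in C\}$, the law of the colouring is invariant under horizontal translation, so for fixed $s$ the event $\{\exists C\in\Bcal_t: x(C)=s\}$ has probability zero (translate everything by an independent $U\sim\mathrm{Unif}[0,1]$ and integrate). Without this (or some equivalent atomlessness statement), you are claiming a null event but not exhibiting why it is null. So the proposal, as written, has a gap precisely at the point it itself flags as ``the remaining delicate point''.
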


\begin{proof}
For $A \subseteq \eR^2$ let us write $x(A) := \sup\{ x : (x,y) \in A \}$ and let $\Kcal_t(A)$ denote the set of path-connected components of 
$A \cap \eR \times[0,t]$. We set $\Bcal_t := \bigcup\{ \Kcal_t(C) : \text{$C$ a black cell} \}$.
We point out that by \CC{1}, almost surely, $\Kcal_t(C)$ is finite for each cell $C$.
Hence, almost surely, $\Bcal_t$ is countable and each bounded set $A \subseteq \eR^2$ intersects finitely many elements of $\Bcal_t$
(using also Lemma~\ref{lem:bordenave} for the latter observation).

Let us now remark that $H_{s\times t} \subseteq H_{s'\times t}$ if $s' \leq s$.
Hence we have that %$\lim_{t'\downarrow t} \Pee( H_{s\times t'} ) = \Pee\left(\bigcup_{t'>t} H_{s\times t'} \right)$
%and
$\Pee( H_{s\times t} ) - \lim_{s'\downarrow s} \Pee( H_{s'\times t} )
= \Pee\left( H_{s\times t} \setminus \bigcup_{s'>s} H_{s'\times t}\right)$.
We observe that if $H_{s\times t} \setminus \bigcup_{s'>s} H_{s'\times t}$ holds, then the rightmost 
point that can be reached by a black curve that stays inside $\eR \times [0,t]$ and starts in $\{s\}\times[0,t]$
must have $x$-coordinate exactly equal to $s$.
In particular there must be some $C \in \Bcal_t$ such that $x(C) = s$.
Since the colouring of the plane produced by the confetti percolation model is invariant
(in law) under horizontal translations and $\Bcal_t$ is almost surely countable, the probability 
that there exists such a $C$ equals zero. (Consider for instance the situation where we first generate the model in the usual way 
and then simultaneously translate all cells to the left by the same amount $U$, with $U$ uniform on $[0,1]$.)
This shows that $\lim_{s'\downarrow s} \Pee( H_{s'\times t} ) = \Pee( H_{s\times t} )$.

Similarly we have $\lim_{s'\uparrow s} \Pee( H_{s'\times t} ) - \Pee( H_{s\times t} ) = \Pee\left( \bigcup_{s'<s} H_{s'\times t} \setminus
H_{s\times t}\right)$. We observe that if $\bigcup_{s'<s} H_{s'\times t} \setminus
H_{s\times t}$ holds and only finitely many $C \in \Bcal_t$ intersect $[0,s]\times[0,t]$, then there must be some $C \in \Bcal_t$ with $x(C)=s$.
(Since only finitely many elements of $\Bcal_t$ intersect $[0,s]\times[0,t]$ and for every $s'<s$ there is a $C \in \Bcal_t$ with $s' \leq x(C) \leq s$.)
Hence we also have $\lim_{s'\uparrow s} \Pee( H_{s'\times t} ) = \Pee( H_{s\times t} )$, proving continuity in $s$.
Continuity in $t$ follows by the symmetry of the model.
\end{proof}

% For $R$ an axis parallel rectangle and $B, W \subseteq \eR^2$ disjoint, we denote by 
% $H(R,B,W)$ the event that if, when we set the colour of the points of $B$ to be black
% and the colour of the point of $W$ to be white while all other points are assigned a colour in
% the ordinary way, there is a black horizontal crossing of $R$.
% In particular we have $H(R) = H(R,\emptyset,\emptyset)$.

% With the last three lemmas at hand, the proof of Proposition~\ref{prop:kmain} is an easy compactness argument. For completeness 
% we spell out the details. 

The final ingredient we will need for the proof of Proposition~\ref{prop:kmain} is the following variant of
Dini's theorem. 
%(The condition that the $f_n$-s are continuous from Dini's theorem has been swapped for them being non-decreasing and
%the condition that the domain is compact has been swapped for it being a product of closed, finite intervals.)
It is an easy undergraduate exercise, but for completeness we provide a proof in Appendix~\ref{sec:dini}.

\begin{lemma}\label{lem:dini}
Let $I \subseteq \eR^d$ be an axis parallel box (in other words, a cartesian product of bounded, closed intervals) and let 
$f, f_1, f_2, \dots$ be functions satisfying:
\begin{enumerate}
 \item $f: I \to \eR$ is continuous, and;
 \item\label{itm:partii} $f_k : I \to \eR$ is non-decreasing (in each coordinate) for all $k\in\eN$, and;
 \item $f_{k+1}(x) \geq f_k(x)$ for all $k\in \eN, x\in I$, and;
 \item $\displaystyle\lim_{k\to\infty}f_k(x)=f(x)$ for all $x\in I$.  
\end{enumerate}
Then $f_k$ converges {\em uniformly} to $f$.
\end{lemma}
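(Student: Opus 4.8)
The plan is to prove Lemma~\ref{lem:dini}, the multivariate/monotone variant of Dini's theorem, by essentially the classical Dini argument, exploiting compactness of the box $I$ and the fact that the $f_k$ are non-decreasing in each coordinate so as to turn pointwise control into uniform control. I would first reduce to showing that for every $\eps > 0$ there is $k_0$ with $0 \leq f(x) - f_k(x) < \eps$ for all $x \in I$ and all $k \geq k_0$; note $f(x) - f_k(x) \geq 0$ by monotone convergence of the increasing sequence $f_k(x)$ to $f(x)$.

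For the uniform bound, fix $\eps > 0$. For each point $a \in I$, pick $k_a$ with $f(a) - f_{k_a}(a) < \eps/3$. Since $f$ is continuous at $a$, choose an open box $U_a \ni a$ (inside $\eR^d$) such that $\abs{f(x) - f(a)} < \eps/3$ for all $x \in U_a \cap I$; I can take $U_a$ to be a product of open intervals, and I will choose it so that $a$ is \emph{not} the maximal corner — concretely, replace $U_a$ by a slightly enlarged open box $U_a'$ still satisfying the continuity estimate but with $a$ lying strictly below the upper corner $b_a$ of a closed sub-box $[c_a, b_a] \subseteq U_a' \cap (\text{a neighborhood})$; more simply, for each $a$ let $b_a \in U_a \cap I$ be a point with $b_a \geq a$ coordinatewise (which exists unless $a$ is the top corner of $I$; in that boundary case just take $b_a = a$). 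The sets $\{U_a : a \in I\}$ form an open cover of the compact set $I$, so finitely many $U_{a_1}, \dots, U_{a_m}$ cover $I$. Set $k_0 := \max(k_{a_1}, \dots, k_{a_m})$.

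Now for arbitrary $x \in I$ and $k \geq k_0$: $x$ lies in some $U_{a_j}$, hence $a_j \leq b_{a_j}$ coordinatewise and $x \in U_{a_j}$. The key step is monotonicity: I want to find a point $y \in I$ with $y \geq x$ coordinatewise and $f_{k_{a_j}}(y) \geq f_{k_{a_j}}(a_j)$ together with $\abs{f(y) - f(a_j)}$ small. The cleanest route is to choose the cover so that each $U_{a_j}$ is contained in the region $\{x : x \leq b_{a_j}\}$ with $b_{a_j} \geq a_j$ and $b_{a_j} \in I$ and $\abs{f(b_{a_j}) - f(a_j)} < \eps/3$; then for $x \in U_{a_j}$ we have $x \leq b_{a_j}$, so $f_k(x) \leq f_k(b_{a_j})$ is the wrong direction — I actually need a \emph{lower} corner. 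So instead I pick, for each $a$, a closed box $[c_a, b_a] \subseteq U_a \cap I$ (possibly degenerate at a face of $I$) with $a \in [c_a,b_a]$, arrange the cover from the interiors of these boxes, and for $x$ in the box corresponding to $a_j$ use $x \geq c_{a_j}$ and $x \leq b_{a_j}$. Then $f(x) - f_k(x) \leq f(b_{a_j}) - f_k(c_{a_j})$ since $f_k$ is non-decreasing; and $f_k(c_{a_j}) \geq f_{k_{a_j}}(c_{a_j})$ for $k \geq k_0$ since the sequence is increasing in $k$. Finally $f(b_{a_j}) - f_{k_{a_j}}(c_{a_j}) = [f(b_{a_j}) - f(a_j)] + [f(a_j) - f_{k_{a_j}}(a_j)] + [f_{k_{a_j}}(a_j) - f_{k_{a_j}}(c_{a_j})]$; the first bracket is $< \eps/3$ by continuity (as $b_{a_j} \in U_{a_j}$), the second is $< \eps/3$ by choice of $k_{a_j}$, and the third is $\leq 0$ since $c_{a_j} \leq a_j$ and $f_{k_{a_j}}$ is non-decreasing. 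Hence $0 \leq f(x) - f_k(x) < 2\eps/3 < \eps$, uniformly in $x$, for all $k \geq k_0$.

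The main obstacle — and the only place requiring care — is the bookkeeping of the "corner" boxes: one must simultaneously choose, around each $a \in I$, a closed sub-box $[c_a, b_a]$ of the continuity-neighborhood of $a$ that contains $a$ and lies in $I$, so that a point $x$ in that box is squeezed between a lower corner $c_a \leq a$ (to exploit monotonicity of $f_k$ in the right direction) and an upper corner $b_a$ in the continuity neighborhood. Near faces and corners of $I$ some of these boxes degenerate (e.g. at the top corner of $I$ one takes $c_a = b_a = a$), but that is harmless. Once the geometry is set up, everything else is the standard three-$\eps$ Dini splitting plus compactness, as indicated above; I would relegate the full verification to Appendix~\ref{sec:dini} as the paper does.
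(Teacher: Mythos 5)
Your proposal takes essentially the same route as the paper's Appendix~\ref{sec:dini}: around each $a \in I$ choose a closed box inside a continuity neighbourhood with $a$ in its relative interior, use the \emph{bottom corner} as the point at which to invoke pointwise convergence, exploit coordinatewise monotonicity of $f_k$ to propagate that estimate over the whole box, and finish by compactness. However, one step is not justified as written: in $f(x) - f_k(x) \leq f(b_{a_j}) - f_k(c_{a_j})$, the half $f_k(x)\geq f_k(c_{a_j})$ indeed follows ``since $f_k$ is non-decreasing'', but the other half, $f(x)\leq f(b_{a_j})$, does not --- it needs monotonicity of $f$ itself, which you never establish. It is true (as the pointwise limit of coordinatewise non-decreasing functions, $f$ inherits that monotonicity), but you should say so; alternatively, the upper corner $b_{a_j}$ is entirely superfluous, as the paper's argument shows: bound $f(x) - f_{k_{a_j}}(c_{a_j}) = [f(x)-f(a_j)] + [f(a_j)-f_{k_{a_j}}(a_j)] + [f_{k_{a_j}}(a_j)-f_{k_{a_j}}(c_{a_j})]$ and control the first bracket by continuity directly at $x\in U_{a_j}$, so that monotonicity is only ever used for the $f_k$. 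A second, smaller gloss: the open cover must be formed from interiors taken in the \emph{relative} topology of $I$ (the paper's $O_x := \inter_I(U_x)$), since at a face of $I$ your boxes $[c_a,b_a]$ degenerate and their ambient interiors would be empty and hence fail to contain $a$; you acknowledge degeneracy but don't make the relative-interior device explicit.
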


Let us remark that, by an easy reparametrization, the lemma also holds if instead of condition~\ref{itm:partii} 
$f_k$ is non-decreasing in some coordinates and non-increasing in the other coordinates -- 
with the set of coordinates on which $f_k$ is non-increasing being the same for each $k$.

\begin{proofof}{Proposition~\ref{prop:kmain}}
Without loss of generality we can take $A = [-K,K]^2$ for some $K$.
We define $f, f_1, f_2, \dots: [-K,K]^4 \to \eR$ by

\[ \begin{array}{l}
f(x_1,x_2,y_1,y_2) := \Pee_{\lb,\lw}{\Big[} H([x_1,\max(x_1,x_2)]\times[y_1,\max(y_1,y_2)]){\Big]}, \\
f_k(x_1,x_2,y_1,y_2) := \Pee_{\lb,\lw}{\Big[} H^{(k)}([x_1,\max(x_1,x_2)]\times[y_1,\max(y_1,y_2)]){\Big]}.
\end{array}
\]

\noindent
We have $f_{k+1} \geq f_k$ by definition of $E^{(k)}$.
By Lemma~\ref{lem:limk}, $f_k$ converges pointwise to $f$.
That $f$ is continuous follows from Lemma~\ref{lem:cts2} (note that
$f(x_1,x_2,y_1,y_2) = f(0,x_2-x_1,0,y_2-y_1)$).
Finally note that $f_k$ is non-decreasing in $x_1,y_1$ and non-increasing in $x_2,y_2$.
Appealing to Lemma~\ref{lem:dini} and the remark following it, we see that 
$f_k$ converges uniformly to $f$, which is precisely what Proposition~\ref{prop:kmain} states.
\end{proofof}

\section{Crossing probabilities when $p > 1/2$\label{sec:pGreaterHalf}}

% \begin{proposition} For every $p > 1/2$ and $\rho > 1$, it holds that $\displaystyle \lim_{s\to\infty} \Pee_p( H_{\rho s\times s} ) = 1$.
% \end{proposition}

\begin{proposition}\label{prop:cross} 
For every $p > 1/2$ it holds that $\displaystyle \sup_{s>1000} \Pee_p( H_{3s\times s} ) = 1$.
\end{proposition}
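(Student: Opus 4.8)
The plan is to argue by contradiction: assume there is some $p>1/2$ for which $\sup_{s>1000}\Pee_p(H_{3s\times s})<1$, i.e. there is a uniform bound $\sup_{s>1000}\Pee_p(H_{3s\times s})\le 1-2\alpha$ for some $\alpha>0$. Combining this with Theorem~\ref{thm:HirschRSW} (which gives a positive liminf for crossing probabilities at $p=1/2$, for any aspect ratio, after the standard RSW-type manipulations) we can find a large scale $s$ and arrange that the crossing probability of an appropriate rectangle sits strictly between $\alpha$ and $1-\alpha$ --- both at $p=1/2$ and, by monotonicity in $p$ (crossing events are black-increasing, Lemma~\ref{lem:HarrisHirsch}), it stays bounded away from $1$ for all $p'\in[1/2,p]$. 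The idea is then to derive a contradiction by showing that, because the crossing probability never rises close to $1$ on this whole interval, Proposition~\ref{prop:BourTobias} forces the existence of a bounded number of "influential" cubes, and then to show geometrically that such influential cubes cannot exist.

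First I would set up the discretization: fix $s$ and a rectangle $R=[0,3s]\times[0,s]$ (or a suitable variant), and apply Proposition~\ref{prop:kmain} to replace $H(R)$ by the discrete event $H^{(k)}(R)$ for $k$ large, losing only $\eps$ in probability; this keeps the probability in $(\alpha,1-\alpha)$ uniformly in $p'\in[1/2,p]$. The event $H^{(k)}(R)$ is determined by finitely many independent Bernoulli variables --- one for each cube $c\in\Ccal_k$ inside the relevant slab, indicating whether $c$ contains a black point, and one indicating whether it contains a white point --- and $H^{(k)}(R)$ is an up-set in the black-indicators and a down-set in the white-indicators; after flipping the white coordinates it becomes a genuine up-set $A\subseteq\{0,1\}^n$. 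The means $p_i$ are $1-e^{-\lb 2^{-3k}}$ or $1-e^{-\lw 2^{-3k}}$ (up to the exact cube volume), so as $p'$ ranges over $[1/2,p]$ we get a one-parameter family $\overline{p}(p')$. The key analytic input is that the total-influence quantity $\sum_i p_i(1-p_i)\frac{\partial}{\partial p_i}\Pee_{\overline{p}}[A]$ is, up to constants, the derivative $\frac{d}{dp'}\Pee_{p'}(H^{(k)}(R))$; if this were large on all of $[1/2,p]$ the probability would increase by more than $1$ across the interval, contradicting that it stays in $(\alpha,1-\alpha)$. So at some $p'\in[1/2,p]$ the total influence is at most some constant $C=C(p,\alpha)$, and Proposition~\ref{prop:BourTobias} applies at that $p'$.

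The conclusion of Proposition~\ref{prop:BourTobias} gives $K=K(C,\alpha)$ cubes $c_{i_1},\dots,c_{i_K}$ such that conditioning on all of them containing a black point raises the crossing probability by $\delta$ (case (a)), or conditioning on all of them containing a white point lowers it by $\delta$ (case (b)). Here $K$ does not depend on $k$. The main obstacle --- and the heart of the argument --- is to rule this out geometrically. The strategy is: if case (a) held, then with unconditional probability at least (roughly) $\delta$ we would be in the situation where (i) $R$ has a black crossing and (ii) every black crossing of $R$ passes within distance $\sqrt{3}\cdot 2^{-k}$ of the projection of one of the $K$ cubes onto the plane. Indeed, influence of a cube on a monotone crossing event means that in a constant fraction of configurations the cube is "pivotal-ish": forcing it black creates a crossing, so an existing crossing must run near it. But $K$ is bounded while $2^{-k}$ can be taken arbitrarily small (choose $k$ large after $C,\alpha,\delta,K$ are fixed!), so the union of these $K$ tiny projected squares has area $O(K\cdot 4^{-k})\to 0$; a black crossing of a $3s\times s$ rectangle that is forced to thread through $K$ balls of radius $\to 0$ is, with probability $\to 0$, even possible --- more carefully, one shows that conditional on having a crossing, the probability that all crossings funnel through any fixed small collection of balls tends to $0$, using translation/rotation invariance and the positive-probability lower bound on crossings together with a gluing (square-root trick / Harris, Lemma~\ref{lem:HarrisHirsch}) argument to produce, with positive probability, a second crossing avoiding those balls. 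This contradicts the $\delta$ lower bound. Case (b) is handled symmetrically, using instead that conditioning on white points in the $K$ cubes must destroy crossings, hence every crossing passes near one of the cubes, and the same area/invariance argument applies. Hence no such $p$ exists and $\sup_{s>1000}\Pee_p(H_{3s\times s})=1$ for every $p>1/2$.
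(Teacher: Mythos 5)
Your high-level plan matches the paper's: discretize via Proposition~\ref{prop:kmain}, use the Margulis--Russo/mean-value argument to find a parameter value where $\sum_i p_i(1-p_i)\partial_{p_i}\Pee[A]$ is bounded, invoke Proposition~\ref{prop:BourTobias} to obtain $K$ ``influential'' cubes with constant boost $\delta$, and then derive a contradiction by showing crossings can avoid neighborhoods of those $K$ cubes. The setup, the choice of $\lb(t)=1+t(2p-1)$, $\lw(t)=1-t(2p-1)$, and the computation of the influence bound $C = 2p/(2p-1)$ are all consistent with the paper's proof.

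However, the geometric contradiction step --- which is the heart of the proof --- has a genuine gap. Two issues. First, the ``area argument'' (that the $K$ projected cubes have total area $O(K\cdot 4^{-k})\to 0$, hence crossings cannot be forced to thread through them) does not establish anything: small area is neither necessary nor sufficient for crossings to be able to route around the cubes. What you actually need is a \emph{black circuit} in an annulus around each chosen point $q_j$, with probability close to $1$ (specifically $\ge 1-\delta/3$), since you must beat the $\delta$ gap from Bourgain. Second, your suggestion to ``take $k$ large after $C,\alpha,\delta,K$ are fixed'' does not help: a new $k$ gives a new discrete up-set and Bourgain gives you a \emph{new} collection of $K$ cubes, so nothing carries over, and more importantly, shrinking the cubes alone does not make circuits around their projections easy to find --- you still need a positive-probability RSW-type input and a boosting mechanism to push the circuit probability near $1$.

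The missing idea is the multi-scale circuit construction. The paper chooses a geometrically-separated sequence $s_1<s_2<\dots$ with $s_{i+1}>1000 s_i$ coming from Theorem~\ref{thm:HirschRSW}, so that at $p=1/2$ the event $A_{q,s_\ell}$ (a black circuit in the annulus $q+[-3s_\ell/2,3s_\ell/2]^2\setminus(-s_\ell/2,s_\ell/2)^2$) has probability at least $(c/2)^4$ by Harris' inequality (Lemma~\ref{lem:HarrisHirsch}). Critically, the annulus events $E_{j,\ell}$ at well-separated scales $\ell$ are \emph{independent} because they live in disjoint regions of space; hence the probability that at least one of $m-K$ scales succeeds is $\ge 1-(1-(c/2)^4)^{m-K}$, which can be pushed above $1-\delta/(3K)$ by taking $m$ large. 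The sets $\Ical_j$ (scales whose annulus around $q_j$ avoids all other $q_i$'s) are introduced precisely to ensure the circuit $\beta_j$ stays clear of the other influential cubes, which is needed for the rerouting induction to terminate. It is this multi-scale independence across annuli --- and not the smallness of the cubes --- that forces the boost from Bourgain to be $<\delta$ and completes the contradiction. Your ``square-root trick / Harris'' allusion to ``a second crossing'' points vaguely in a related direction but does not supply a mechanism for getting circuit probability close to $1$, and the proposal as written would not close the argument.
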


\begin{proof}
By Proposition~\ref{thm:HirschRSW}, there exists a constant $c>0$ and a sequence $(s_n)_n$
tending to infinity
such that $\Pee_{1/2}( H_{3s_n\times s_n} ) \geq c$ for all $n$.
By restricting to a subsequence if necessary, we can assume without loss of generality that 
$s_1 > 1000$ and $s_{i+1} > 1000 s_i$ for all $i$.

Let us fix an arbitrary $p > 1/2$ and $0<\alpha<c/2$. 
We will show that $\Pee_p( H_{3s_n\times s_n} ) \geq 1-\alpha$ for some $n\in\eN$, which will clearly
prove the proposition.
%We can and do assume without loss of generality that $\alpha < c/2$.
%
%This will certainly prove the proposition, $p, \alpha$ being arbitrary.

From now on we switch to the $\Pb, \Pw$ setting.
We pick %$\eps = \eps(p,\alpha) > 0$ small and 
$m = m(p,\alpha) \in \eN$ large (to be made precise later in the proof).
%and we will consider black, horizontal crossings of $R_0 = [0,3s_m]\times[0,s_m]$.
Appealing to Proposition~\ref{prop:kmain}, we can pick a $k=k(m)$ such that 

\begin{equation}\label{eq:choiceofk}
\left|\Pee_{1,1}[ H^{(k)}(R) ] - \Pee_{1,1}[ H(R) ]\right|, \left|\Pee_{1,1}[ V^{(k)}(R) ] -
\Pee_{1,1}[ V(R) ] \right| \leq  c/2,
\end{equation}

\noindent
for %all $\lb,\lw \in [2(1-p),2p]$, 
every axis-parallel rectangle
$R \subseteq [-1000s_m, 1000s_m]^2$. %, and every $k \geq k_0$. 
We now define the function $F:[0,1]\to[0,1]$ by:

\[ 
F(t) := \Pee_{\lb(t),\lw(t)}\left[ H^{(k)}_{3s_m\times s_m} \right],  
\]

\noindent 
where 

\[ 
\lb(t) := 1 + t(2p-1), \quad \lw(t) := 1 - t(2p-1). 
\]

\noindent
By the discussion in Section~\ref{sec:preliminaries}, we have 

\[ \begin{array}{c}
F(0) = \Pee_{1,1}\left[ H^{(k)}_{3s_m\times s_m} \right] = \Pee_{1/2}\left[ H^{(k)}_{3s_m\times s_m} \right], \\ 
F(1) = \Pee_{2p,2(1-p)}\left[ H^{(k)}_{3s_m\times s_m} \right] = \Pee_p\left[ H^{(k)}_{3s_m\times s_m} \right]. 
\end{array} \]

\noindent
In particular, $\Pee_p\left[ H_{3s_m\times s_m}\right] \geq F(1)$, so that it suffices to prove that $F(1) \geq 1-\alpha$.
Aiming for a contradiction, let us assume that $F(1) < 1-\alpha$ instead.

We now remark that, for every event $E$, whether or not $E^{(k)}$ holds is determined by a finite number of %$n$ 
independent Bernoulli random variables. 
%where $n = k^3 2^{3k+1}$ is twice the number of side-$2^{-k}$ cubes in the dissection $\Ccal_k$. 
To make this more concrete, let us arbitrarily enumerate the side length $2^{-k}$-cubes of $\Ccal_k$ as $c_1,\dots,c_{n/2}$, 
where $n = k^3 2^{3k+1}$ is twice the number of such cubes.
We now define
%For $1\leq i \leq {n/2}$ we define $X_i$ and $X_{i+n/2}$ by: 

\[ 
\begin{array}{rcl}
\text{For $1 \leq i\leq n/2$:} \quad
X_i & = & \left\{\begin{array}{cl}
  1 & \text{ if $c_i$ contains a black point, } \\
  0 & \text{ otherwise}. 
\end{array}\right., \\
& & \quad \\
\text{For $n/2< i\leq n$:} \quad
X_{i} & = & \left\{\begin{array}{cl}
  1 & \text{ if $c_{i-n/2}$ does {\em not} contain a white point, } \\
  0 & \text{ otherwise}. 
  \end{array}\right.
   \end{array}
\]

\noindent
Then the variables $X_1,\dots, X_n$ are independent Bernoulli random variables and we can write 
$\Pee_{\lb,\lw}\left[E^{(k)}\right] = \Pee[ (X_1,\dots,X_n) \in A ]$ for some
$A = A(E) \subseteq \{0,1\}^n$.
%with $X_i \isd \Be(p_i)$ where $p_i = 1-\exp[-\lb 2^{-3k} ]$ 
%for $i \leq n/2$ and $p_i = \exp[-\lw 2^{3k} ]$ for $i>n/2$.

In particular, we can write 

\begin{equation}\label{eq:Frewrite}
 F(t) = \Pee_{\overline{p}(t)}[ (X_1,\dots,X_{n}) \in A ],
\end{equation}

\noindent
where $A \subseteq \{0,1\}^n$ is an up-set (as $H^{(k)}_{3s_m\times s_m}$ is a black-increasing event),
and the parameters $p_i(t) = \Ee X_i$ satisfy:

\[  
p_i(t) = %\Ee X_i = 
\left\{
\begin{array}{cl}
1-\exp[-\lb(t) 2^{-3k} ] & \text{ for $1\leq i\leq n/2$, and } \\
\exp[-\lw(t) 2^{-3k}] & \text{ for $n/2 < i \leq n$.}
\end{array}\right.
\]

\noindent
The function $F$ is differentiable as can be seen from the expression~\eqref{eq:Frewrite} and the expressions for $p_i(t)$.
By the mean value theorem, there must be a $t \in [0,1]$ such that 

\[ F'(t) \leq F(1) - F(0) \leq 1. \] 

\noindent
By the chain rule we have

\[ F'(t) = \sum_{i=1}^n \frac{\partial}{\partial p_i} \Pee_{\overline{p}}[ (X_1,\dots,X_{n}) \in A ] \cdot p_i'(t). \]

\noindent
For $i \leq n/2$ we have 

\[ \begin{array}{rcl}
    p_i'(t) 
    & = &
    (2p-1) \cdot 
    2^{-3k} \cdot 
    \exp[-\lb(t)2^{-3k}] \\
    & \geq & 
    \left(\frac{2p-1}{2p}\right) \cdot 
    \lb(t) \cdot 
    2^{-3k} \cdot
    \exp[-\lb(t)2^{-3k}] \\
    & \geq & 
    \left(\frac{2p-1}{2p}\right) \cdot 
    (1-\exp[-\lb(t) 2^{-3k}]) \cdot
    \exp[-\lb(t)2^{-3k}] \\
    & = & 
    \left(\frac{2p-1}{2p}\right) p_i(t) (1-p_i(t)).
   \end{array}
\]

\noindent
(Here the second line follows since $\lb(t) \leq 2p$ and the third line follows since $e^{-x} \geq 1-x$ for all $x\geq 0$.)
Similarly, we find that for $i \geq n/2$:

\[ 
   p_i'(t)  = 
    (2p-1)\cdot 2^{-3k} \cdot \exp[-\lw(t)2^{-3k}] 
   \geq
%     \left(\frac{2p-1}{2p}\right) \cdot \lb(t) 2^{-3k}  \exp[-\lb(t)2^{-3k}] \\
%     & \geq & 
%     \frac{2p-1}{2p} \cdot (1-\exp[-\lb(t) 2^{-3k}])  \exp[-\lb(t)2^{-3k}] \\
%     & = & 
    \left(\frac{2p-1}{2p}\right) p_i(t) (1-p_i(t)).
\]

\noindent
It follows that 

\[ 
 \sum_{i=1}^n p_i(1-p_i) \frac{\partial}{\partial p_i} \Pee_{\overline{p}}[ (X_1,\dots,X_{n}) \in A ] \leq
 %2p(1-\alpha-c/2)/(2p-1) 
2p/(2p-1)
 =: C.
\]

\noindent
Using that $F$ is non-decreasing as $\lb(t)$ is increasing and $\lw(t)$ decreasing, that $F(1) < 1-\alpha$ by assumption, and that
$F(0) = \Pee_{1/2}[ H^{(k)}_{3s_m\times s_m}] \geq \Pee_{1/2}[ H_{3s_m\times s_m}] - c/2 \geq c/2 > \alpha$, 
we also have 

\[ 
 \Pee_{\opt}[ (X_1,\dots,X_{n}) \in A ] = F(t) \in [F(0),F(1)] \subseteq (\alpha, 1-\alpha).
\]

\noindent
%$\alpha \leq F(0) \leq F(t) \leq F(1) \leq 1-\alpha$, 
Proposition~\ref{prop:BourTobias} 
thus provides us with indices $1\leq i_1, \dots, i_K \leq n$ and $b\in \{0,1\}$ such that  

\begin{equation}\label{eq:bourconc}
 \left|\Pee_{\opt}{\Big[} H_{3s_m\times s_m}^{(k)} {\Big|} X_{i_1}=\dots=X_{i_K}=b {\Big]} - 
 \Pee_{\opt}{\Big[} H_{3s_m\times s_m}^{(k)}{\Big]} \right| \geq \delta,
\end{equation}

\noindent
where $K = K(\alpha, C) \in \eN$ and $\delta = \delta(\alpha, C) > 0$ are constants that depend only on $p$ and $\alpha$ but 
-- and this is crucial for the current proof -- not on $m$ or $k$.

For $j=1,\dots, K$ let us fix a point $q_j \in \eR^2$ that is above the cube that $X_{i_j}$ corresponds to. (I.e.~$q_j$ is contained
in the projection of the cube $c_{i_j}$, respectively $c_{i_j-n/2}$, for $i_j \leq n/2$, respectively $i_j > n/2$.)
For $q \in \eR^2$ and $r>0$ let us denote 
$R_{q,r}^{\lf} := q+\left[\frac{-3r}{2},\frac{-r}{2}\right]\times\left[\frac{-3r}{2},\frac{3r}{2}\right], 
R_{q,r}^{\ri} := q + \left[\frac{r}{2},\frac{3r}{2}\right]\times\left[-\frac{3r}{2},\frac{3r}{2}\right],
R_{q,r}^{\tp} := q+\left[\frac{-3r}{2},\frac{3r}{2}\right]\times\left[\frac{r}{2},\frac{3r}{2}\right], 
R_{q,r}^{\bo} := q + \left[\frac{-3r}{2},\frac{3r}{2}\right]\times\left[\frac{-3r}{2},\frac{-r}{2}\right]$
and let $A_{q,r}$ denote the event $V(R_{q,r}^{\lf} ) \cap V(R_{q,r}^{\ri}) \cap H(R_{q,r}^{\tp})\cap H(R_{q,r}^{\bo})$. 
See Figure~\ref{fig:annulus} for a depiction.
%\newpage 

\begin{figure}[h]
\begin{center}
 \input{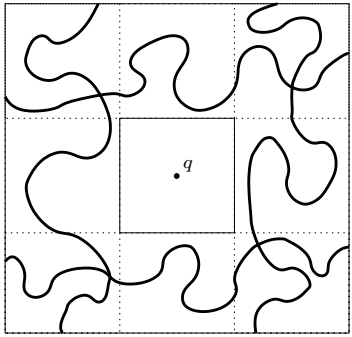}
 
 \vspace{-.7cm}
 
 \end{center}
 \caption{The event $A_{q,r}$.\label{fig:annulus}}
\end{figure}

\noindent
Let us observe that the event $A_{q,r}$ implies %in particular 
that there is a closed, black, polygonal Jordan 
%%%%%%%
%\footnote{Recall that Jordan curve is a closed, non-self-intersecting curve. Put differently, it is 
%a homeomorphic image of the circle $S^1$.}%%
%%%%%%%
 curve that separates $q+\left(-\frac{r}{2},\frac{r}{2}\right)^2$ from $\eR^2 \setminus \left( q + \left[-\frac{3r}{2},\frac{3r}{2}\right]^2\right)$. 
For $j=1,\dots, K$ and $\ell=1,\dots,m-1$ let us define $E_{j,\ell} := A^{(k)}_{q_j,s_\ell}$.
By Lemma~\ref{lem:HarrisHirsch}, the choice of the sequence $(s_n)_n$ and~\eqref{eq:choiceofk}, we have that

\[ 
\Pee_{\opt}[ E_{j,\ell} ] \geq \Pee_{\op(0)}[ E_{j,\ell} ] \geq (c/2)^4. %\quad (\forall i=1,\dots,K \text{ and } j=1,\dots,m-1).
%\quad (\forall q\in [0,3s_m]\times[0,s_m] \text{ and } \ell \leq m) 
\]

\noindent
In the first inequality we also used that $E_{j,\ell}$ is a black-increasing event and 
$\opt \geq \op(0)$ as $\lb(t) \geq 1=\lb(0)=\lw(0)\geq \lw(t)$. \\
For $1\leq j \leq K$ we define $\Ical_j \subseteq \{1,\dots, m-1\}$ by:
%denote the set of all $1 \leq \ell \leq m$ such that% 

\[ %\begin{array}{c}
\ell \in \Ical_j %\\
%\Updownarrow \\ 
\Leftrightarrow
\{ q_1,\dots, q_K \} \cap \left( q_{j} + 
\left[\frac{-3s_\ell-100}{2},\frac{3s_\ell+100}{2}\right]^2\setminus\left[\frac{-s_\ell+100}{2},\frac{s_\ell-100}{2}\right]^2 \right) 
= \emptyset. 
%\end{array} 
\]%
In particular, if $\ell \in \Ical_j$ then each of $q_1,\dots, q_K$ has distance at least 50 to the square annulus
$q_{j} + \left[\frac{-3s_\ell}{2},\frac{3s_\ell}{2}\right]^2\setminus\left[\frac{-s_\ell}{2},\frac{s_\ell}{2}\right]^2$.
Let us observe that, since $s_1\geq 1000, s_{i+1}\geq 1000 s_i$ by assumption, we have that
$|\Ical_j| \geq (m-1)-(K-1)=m-K$ for each $j$.
Let us set:

\[ E_j := \bigcup_{\ell \in \Ical_j} E_{j,\ell}. \]

\noindent
Using that %$\lb(t) \geq 1 \geq lw(t)$ in the first inequality, and for the second inequality we also use that 
the events $E_{j,1},\dots, E_{j,m-1}$ are independent, we find

\[ \Pee_{\opt}[ E_j ] =  
1 - \Pee_{\opt}%\left[
\Big{[}\bigcap_{\ell\in\Ical_j} E_{j,\ell}^c \Big{]}%\right] 
\geq  1 - (1-(c/2)^4)^{|\Ical_j|} \geq 
%\Pee_{1,1}[ E_j ] \geq 
1 - (1-(c/2)^4)^{m-K}. \]

\noindent 
Writing $E := \bigcap_{j=1}^K E_j$, we have:

\[ \Pee_{\opt}( E ) \geq 1 - K(1-(c/2)^4)^{m-K} \geq 1 - \delta/3, \]

\noindent
where the last inequality holds for $m$ sufficiently large.
Thus we also have that 

\begin{equation}\label{eq:delta1}
\Pee_{\opt}{\Big[} H^{(k)}_{3s_m\times s_m} \cap E {\Big]}  
\geq 
 \Pee_{\opt}{\Big[} H^{(k)}_{3s_m\times s_m} {\Big]} - \Pee_{\opt}{\Big[} E^c {\Big]} 
\geq 
 \Pee_{\opt}{\Big[} H^{(k)}_{3s_m\times s_m} {\Big]} - \delta/3.
% \begin{array}{rcl}
% %\Pee_{\lb(t),\lw(t)}{\Big[} H^{(k)}_{3s_m\times s_m} {\Big]} 
% %& \geq &
%  \Pee_{\opt}{\Big[} H^{(k)}_{3s_m\times s_m} \cap E {\Big]}  %\\
% & \geq &
%  \Pee_{\opt}{\Big[} H^{(k)}_{3s_m\times s_m} {\Big]} - \Pee_{\lb(t),\lw(t)}{\Big[} E {\Big]} \\
% & \geq &
%  \Pee_{\opt}{\Big[} H^{(k)}_{3s_m\times s_m} {\Big]} - \delta/3.
% \end{array} 
\end{equation}

% In other words:
% 
% \begin{equation}\label{eq:delta1}
%  \left| \Pee_{\lb(t),\lw(t)}{\Big[} H^{(k)}_{3s_m\times s_m} {\Big]}  - \Pee_{\lb(t),\lw(t)}{\Big[} H^{(k)}_{3s_m\times s_m} \cap A {\Big]} \right| 
%  \leq \delta/3
% \end{equation}

\noindent
Note that the event $E$ is independent of the event $\{ X_{i_1}=\dots=X_{i_K}=b \}$ since the 
state of these random variables can only influence the colour of points within distance less than two ($1+2^{1/2-k}$ to be exact) of $q_{1}, \dots, q_{K}$.
Hence, completely analogously to~\eqref{eq:delta1}, it follows that 

\begin{equation}\label{eq:delta2}
% \begin{array}{c}
% \Pee_{\lb(t),\lw(t)}{\Big[} H^{(k)}_{3s_m\times s_m} \cap E {\Big|} X_{i_1} = \dots = X_{i_K} = b {\Big]} \\
% \geq \\
% \Pee_{\lb(t),\lw(t)}{\Big[} H^{(k)}_{3s_m\times s_m}{\Big|} X_{i_1} = \dots = X_{i_K} = b {\Big]} - \delta/3
% \end{array}
\Pee_{\opt}{\Big[} H^{(k)}_{3s_m\times s_m} \cap E {\Big|} X_{i_1} = \dots = X_{i_K} = b {\Big]} \geq
\Pee_{\opt}{\Big[} H^{(k)}_{3s_m\times s_m}{\Big|} X_{i_1} = \dots = X_{i_K} = b {\Big]}  - 
\delta/3
\end{equation}

% \[ \begin{array}{c}
% \Pee_{\lb(t),\lw(t)}{\Big[} H^{(k)}_{3s_m\times s_m} \cap A {\Big|} X_{i_1} = \dots = X_{i_K} = b {\Big]} \\
%  \geq \\
% \Pee_{\lb(t),\lw(t)}{\Big[} H^{(k)}_{3s_m\times s_m} {\Big|} X_{i_1} = \dots = X_{i_K} = b {\Big]}
% - \Pee_{\lb(t),\lw(t)}{\Big[} A {\Big|} X_{i_1} = \dots = X_{i_K} = b {\Big]} \\
%  \geq \\
% \Pee_{\lb(t),\lw(t)}{\Big[} H^{(k)}_{3s_m\times s_m} {\Big|} X_{i_1} = \dots = X_{i_K} = b {\Big]} - \delta/2
% \end{array} \]

\noindent
Next, we claim that:

\begin{claim}\label{clm:cruc} We have
$\Pee_{\opt}{\Big[} H^{(k)}_{3s_m\times s_m} \cap E {\Big|} X_{i_1} = \dots = X_{i_K} = b {\Big]}
=  \Pee_{\opt}{\Big[} H^{(k)}_{3s_m\times s_m} \cap E {\Big]}$.
\end{claim}

\begin{proofof}{Claim~\ref{clm:cruc}}
Let $B$ denote the event that there is a black, horizontal crossing of $R := [0,3s_m]\times[0,s_m]$ 
that does not get within distance two of any of the points $q_1,\dots,q_K$.
Obviously we have $B^{(k)} \subseteq H_{3s_m\times s_m}^{(k)}$.

We will show that $H_{3s_m\times s_m}^{(k)} \cap E \subseteq B^{(k)}$.
This implies that $H_{3s_m\times s_m}^{(k)} \cap E = B^{(k)} \cap E$ is an event that is 
independent of the state of $X_{i_1}, \dots, X_{i_K}$ (since these variables can only influence
the colours of points in the plane that are within distance two of $q_1,\dots,q_K$). This in turn implies the claim.

Let us thus pick a configuration $\omega \in H_{3s_m\times s_m}^{(k)} \cap E$, and consider an arbitrary $k$-perturbation $\omega'$ 
of $\omega$.
In the colouring of the plane defined by $\omega'$, there must be a black, horizontal crossing $\gamma$ of $R$ and
for each $j=1,\dots,K$ there is an $\ell_j \in \Ical_j$ and 
a black polygonal Jordan curve $\beta_j$ 
that separates  $q_j+\left(-s_{\ell_j}/2,s_{\ell_j}/2\right)^2$ from $\eR^2 \setminus \left(q_j+\left[-3s_{\ell_j}/2,3s_{\ell_j}/2\right]^2\right)$.

We will show that there is in fact a black, horizontal crossing $\gamma'$ of $R$ that does not come within distance $2$ of any of the
$q_i$-s.
To show this, it is enough to show that if $\gamma$ comes within distance $2$ of $q_j$, then there is a 
black, horizontal crossing $\gamma' \subseteq \gamma \cup \beta_j$ of $R$ that does not
come within distance 2 of $q_j$.
This is because $\gamma'$ will be within distance 2 of strictly fewer $q_i$-s than $\gamma$ is, as 
$\beta_j$ does not come within distance two of any of $q_1, \dots, q_K$ by choice of $\Ical_j$. 
We can thus apply induction on 
the number of $q_i$-s that are within distance two of $\gamma$ to find a crossing that does not come within distance
two of {\em any} $q_i$.

Let us first assume that $q_j+\left[-3s_{\ell_j}/2,3s_{\ell_j}/2\right]^2$ lies completely in $R$.
Observe that $\gamma$ must intersect $\beta_j$, since  $B(q_j,2) \subseteq q_j+\left[-s_{\ell_j}/2,s_{\ell_j}/2\right]^2$.
Let $z_1$, respectively $z_2$, be the first, respectively last, intersection point
of $\gamma$ and $\beta_j$. Here first and last refers to the order in which we encounter the intersection points as we traverse $\gamma$ from the
left side to the right side of $R$.  
Let $\gamma_1$ be the piece of $\gamma$ between the left side of $R$ and $z_1$ and let $\gamma_2$ be the piece of $\gamma$ between 
$z_2$ and the right side of $R$.
See Figure~\ref{fig:gamma12} for a depiction.

\begin{figure}[hbt]
\begin{center}
 \input{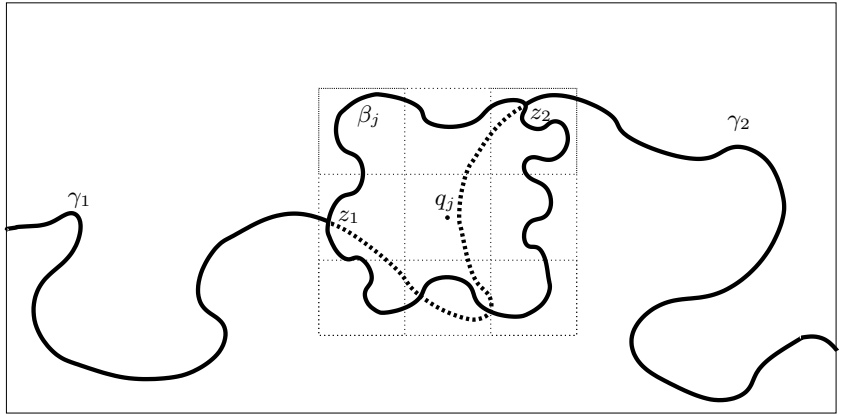}
\end{center}
\caption{The curves $\gamma_1$ and $\gamma_2$ (not to scale).\label{fig:gamma12}}
\end{figure}

\noindent
Since $\beta_j$ is a polygonal Jordan curve, we can find a polygonal curve $\beta_j' \subseteq \beta_j$ between $z_1$ and $z_2$.
Clearly $\gamma' := \gamma_1 \cup \beta_j' \cup \gamma_2$ is a black, horizontal crossing of $R$ all of
whose points are at distance at least two from $q_j$. 

In the case when $q_j+\left[-3s_{\ell_j}/2,3s_{\ell_j}/2\right]^2$ intersects the boundary of $R$ 
we can clearly find a black horizontal crossing $\gamma' \subseteq \gamma \cup \beta_j$ that does not come within
distance 2 of $q_j$ in a similar manner. We leave the details to the reader. (Here it is important that $\beta_j$ cannot 
intersect opposite sides of $R$, since $s_{\ell_j}$ is at least a factor 1000 smaller than the height of $R$.)
Thus, by induction, there is a black horizontal crossing that does not come within distance two of
any of $q_1, \dots, q_K$.

We have now shown that $\omega' \in B$. 
As $\omega'$ is an arbitrary $k$-perturbation of $\omega$, we have $\omega \in B^{(k)}$ as required.
\end{proofof}

We now observe that Claim~\ref{clm:cruc} together with~\eqref{eq:delta1} and~\eqref{eq:delta2} implies
that

\[ \begin{array}{c} 
\left|\Pee_{\opt}{\Big[} H_{3s_m\times s_m}^{(k)} {\Big|} X_{i_1}=\dots=X_{i_K}=b {\Big]} - 
 \Pee_{\opt}{\Big[} H_{3s_m\times s_m}^{(k)}{\Big]} \right| \\
 \leq \\
  \left|\Pee_{\opt}{\Big[} H_{3s_m\times s_m}^{(k)} {\Big|} X_{i_1}=\dots=X_{i_K}=b {\Big]} - 
 \Pee_{\opt}{\Big[} H_{3s_m\times s_m}^{(k)} \cap E {\Big]} \right| \\
 +  \left|\Pee_{\opt}{\Big[} H_{3s_m\times s_m}^{(k)} \cap E {\Big]} - 
 \Pee_{\opt}{\Big[} H_{3s_m\times s_m}^{(k)}{\Big]} \right| \\
= \\
\left|\Pee_{\opt}{\Big[} H_{3s_m\times s_m}^{(k)} \cap E {\Big|} X_{i_1}=\dots=X_{i_K}=b {\Big]} - 
\Pee_{\opt}{\Big[} H_{3s_m\times s_m}^{(k)} {\Big|} X_{i_1}=\dots=X_{i_K}=b {\Big]} \right| \\
  +  \left|\Pee_{\opt}{\Big[} H_{3s_m\times s_m}^{(k)} \cap E {\Big]} - 
 \Pee_{\opt}{\Big[} H_{3s_m\times s_m}^{(k)}{\Big]} \right|   \\
 \leq \\
 2\delta/3, 
 \end{array} \] 
 
\noindent 
contradicting~\eqref{eq:bourconc}.
This contradiction proves that $F(1) \geq 1-\alpha$ as required.
%
%\[ \Pee_{p}( H_{3s_m\times s_m} ) \geq \Pee_p( H_{3s_m\times s_m}^{(k)} ) = F(1) > 1-\alpha, \]
%
%\noindent
%as required.
\end{proof}

\section{The proof of Theorem~\ref{thm:main}\label{sec:grandfinale}}

That percolation does not occur (a.s.)~for $p \leq 1/2$ has already been proved by Hirsch~\cite{HirschArxiv}.
With Proposition~\ref{prop:cross} in hand, that percolation occurs (a.s.)~when $p > 1/2$ follows 
from a standard argument involving a comparison to 1-dependent percolation. 
(See for instance~\cite{BollobasRiordanBoek}, pages 73--75 and bottom of page 287.)

\section{Other confetti shapes.\label{sec:othershapes}}

Here we briefly describe the changes that need to be made in order for our proof to work for 
a more wide range of confetti shapes.
Theorem~\ref{thm:HirschRSW}, a key ingredient to our proof of Theorem~\ref{thm:main}, was in fact 
proved by Hirsch~\cite{HirschArxiv} under rather general assumptions on the shape of the confettis.
In the definition of the confetti process, instead of centering a horizontal disk on each point of the $\Pcal = \{p_1, p_2, \dots\}$, 
we can fix a ``shape'' $A \subseteq \eR^2$ and define collection $C_1, C_2, \dots$ by $C_i = p_i + A$, colour each $C_i$ black with probability 
$p$ and white with probability $1-p$ and then determine the colour of each point of the plane as before.
Let us consider the following list of axioms that we would like our confetti shape $A$ to satisfy:
\begin{quote}
\begin{itemize}
\item[\Ax{1}] $A$ is homeomorphic to the unit disk;
\item[\Ax{2}] $A$ is invariant under rotations by $\pi/2$ and reflections in the coordinate axes;
%\item[\Ax{2}] $0 \in \inter(A)$;
\item[\Ax{3}] $A$ is locally star-shaped, in the sense that for every $z\in A$ there is an open set $U \subseteq \eR^2$ such that
$z\in A\cap U$ and $A\cap U$ is star-shaped with center $z$;
\item[\Ax{4}] $\partial A$ is a Borel subset of $\eR^2$ with finite one-dimensional Hausdorff measure;
\item[\Ax{5}] The sets $\{ x \in \eR^2 : |\partial A \cap (x + \partial A)| = \infty \}$, 
$\{ x\in \eR^2 : A \cap (x+A) \text{ is not regular-open}\}$ 
and $\{ x \in \eR^2 : |(x+\partial A) \cap \{y=0\}| = \infty\}$ have Lebesgue measure zero.
\end{itemize}
\end{quote}
Our Theorem~\ref{thm:main} in fact generalizes to:

\begin{theorem}
If the confetti shape $A$ satisfies the axioms~\Ax{1}--\Ax{5}, then $p_c = 1/2$.
\end{theorem}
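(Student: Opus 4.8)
The plan is to retrace the proof of Theorem~\ref{thm:main} and check that every step survives replacing the unit disk by a shape $A$ obeying \Ax{1}--\Ax{5}. Two ingredients need nothing new: $p_c\ge 1/2$ and Theorem~\ref{thm:HirschRSW} were both proved by Hirsch~\cite{HirschArxiv} for a broad class of shapes including ours, and the passage from Proposition~\ref{prop:cross} to Theorem~\ref{thm:main} in Section~\ref{sec:grandfinale} is a $1$-dependent percolation comparison that does not see the shape. So it suffices to re-prove Proposition~\ref{prop:cross}. In that proof, Proposition~\ref{prop:BourTobias} and Lemma~\ref{lem:HarrisHirsch} (whose statement only mentions black-increasing events, a notion available for any shape) are shape-independent, as is the topological surgery of Claim~\ref{clm:cruc} (rerouting a crossing along a black Jordan curve). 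What genuinely uses geometry is (i) the discrete approximation Proposition~\ref{prop:kmain}, together with the cell-structure facts behind it (\CC{1}, \CC{2}, Lemma~\ref{lem:bordenave}, Lemma~\ref{lem:limk}, Lemma~\ref{lem:cts2}); and (ii) that $A_{q,r}$ forces a separating black Jordan curve and has probability bounded below, together with the symmetry used to get vertical-crossing estimates out of Theorem~\ref{thm:HirschRSW}.

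First I would establish the a.s.\ structural properties. Writing $C_i=p_i+A$ and letting $D_i$ be its projection, \Ax{1} makes every cell a topological disk, while \Ax{4} and the first two sets of \Ax{5}, combined with the standard fact that a.s.\ no pairwise difference of points of $\Pcal$ lies in a prescribed Lebesgue-null set, give the analogues of \CC{1} and \CC{2} (circle segments replaced by arcs of translates of $\partial A$; each point in the interior of a cell or on the boundary of only finitely many cells, with the cell boundaries forming a locally finite plane graph). Lemma~\ref{lem:bordenave} carries over verbatim, using only boundedness of $A$. The third set of \Ax{5} forces a.s.\ every confetti boundary to meet each fixed line $\{y=0\}$ and $\{y=t\}$ finitely often, so $\Kcal_t(C)$ is finite for every cell and the proof of Lemma~\ref{lem:cts2} is unchanged; continuity in $t$ follows from continuity in $s$ through the diagonal symmetry in \Ax{2}.

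Next I would redo Lemma~\ref{lem:limk}, which is the core of Proposition~\ref{prop:kmain}: a.s.\ every black horizontal crossing of $R$ is stable under all sufficiently fine perturbations. For $q$ in the interior of a black cell nothing changes. The new point is $q$ on the common boundary of two black cells, where \Ax{3} replaces smoothness of the circle: local star-shapedness of $A\cap U$ about the relevant boundary point shows that each of the two cells occupies a non-degenerate angular sector at $q$, so a small black triangle --- rather than a ball --- fits into $C_i\cup C_j$ near $q$ and keeps its colour in every $k$-perturbation once $2^{-k}$ is small enough, while regular-openness from \Ax{5} rules out the two cells meeting only along a slit. Covering the compact curve $\gamma$ by finitely many such neighbourhoods and taking $k$ to be the largest of the corresponding thresholds finishes the argument as before, so Proposition~\ref{prop:kmain} holds for $A$. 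The remaining geometry in Proposition~\ref{prop:cross} is the square annuli $A_{q_j,s_\ell}$: their four crossings glue into a separating black polygonal Jordan curve by the same planar topology, and $\Pee[A_{q,r}^{(k)}]\ge(c/2)^4$ follows from Lemma~\ref{lem:HarrisHirsch}, Theorem~\ref{thm:HirschRSW} and \eqref{eq:choiceofk} once we know $\Pee_{1/2}(V(R_{q,r}^{\lf}))$ is bounded below, which is exactly the diagonal symmetry in \Ax{2} applied to Hirsch's estimate with aspect ratio $3$. All the constants ($K,\delta$, the sequence $(s_n)_n$, $m$, $k$) are produced exactly as in the disk case, so the mean-value-theorem/Bourgain contradiction closing Proposition~\ref{prop:cross} is unchanged.

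The main obstacle is step (i), and inside it the boundary-stability half of Lemma~\ref{lem:limk}: for a possibly fractal-like $\partial A$ (only finite $1$-dimensional Hausdorff measure is assumed) one must argue, using only \Ax{3}--\Ax{5}, that a.s.\ the finitely many arcs bounding a cell fit together around each of their common points without leaving an uncovered wedge, so that a genuinely two-dimensional black set survives every fine perturbation. Assembling the required a.s.\ general-position statements --- the Poisson differences and the two fixed horizontal lines avoiding the three \Ax{5} null sets, the $z$-coordinates being distinct, and so on --- is routine but is where essentially all of the work of the generalisation lies.
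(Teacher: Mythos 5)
Your proposal follows the same blueprint as the paper's Section~\ref{sec:othershapes}: import Hirsch's results (Theorem~\ref{thm:HirschRSW}, Lemma~\ref{lem:HarrisHirsch}, $p_c\geq 1/2$) which were already proved for general shapes, re-establish the a.s.\ structural facts \CC{1}, \CC{2} and Lemmas~\ref{lem:bordenave}, \ref{lem:limk}, \ref{lem:cts2} under \Ax{1}--\Ax{5}, push through Proposition~\ref{prop:kmain}, and note that the Bourgain/mean-value-theorem argument in Proposition~\ref{prop:cross} is shape-insensitive up to adjusting a few hard-coded distances. One small divergence: for the boundary-of-two-cells case in Lemma~\ref{lem:limk} you invoke local star-shapedness \Ax{3} to carve a black triangle inside $C_i\cup C_j$ near $q$, anticipating trouble from a fractal $\partial A$. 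The paper instead performs the literal substitution $\|q-\pi(p_i)\|<1 \mapsto q\in\pi(p_i)+\inter A$ and so on, observing that since the chosen crossing $\gamma$ avoids corners and black/white boundaries, the second-highest confetti through $q$ has $q$ in its \emph{interior}, so an ordinary Euclidean ball around $q$ already lies inside a single black confetti and no sector construction is needed; \Ax{3} is present mainly to satisfy Hirsch's own axiom list so that his Theorem~\ref{thm:HirschRSW} and Lemma~\ref{lem:HarrisHirsch} apply. The other difference is cosmetic: where you say the constants are ``produced exactly as in the disk case,'' the paper explicitly rescales the distances $1000$, $100$, $50$, $2$ appearing in Proposition~\ref{prop:cross} and Claim~\ref{clm:cruc} by $\max(\diam(A),1)$, which is what the independence argument in Claim~\ref{clm:cruc} actually requires. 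Neither point is a gap; your outline matches the paper's sketch.
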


It is not hard to see that our set of axioms implies the more general set of axioms listed by Hirsch at the beginning of Section 2 
in~\cite{HirschArxiv}. In particular we know that Lemma~\ref{lem:HarrisHirsch}, Theorem~\ref{thm:HirschRSW} and 
$p_c \geq 1/2$ (Proposition 7 in~\cite{HirschArxiv}) hold in our setting.
We will now sketch the changes that need to be made to adapt our proof of $p_c \leq 1/2$ to the case of confetti shapes that
satisfy the axioms~\Ax{1}--\Ax{5}.

First, we remark that Lemma~\ref{lem:bordenave} also holds in our new setting with more general shapes. (In fact, Bordenave et al.~\cite{Bordenave2006} 
prove it in even greater generality.)
Property \CC{1} still holds if we substitute ``circle segment'' by ``segment of $\partial A$''. This follows 
from~\Ax{1} and~\Ax{5}.
Similarly, \CC{2} still holds by~\Ax{5}. 
(If there is a point on the boundary of more than three cells then 
either a) that point is simultaneously on the boundaries of three or more projected confetti leafs
or b) there are two projected confetti leafs that locally have only that point in common -- or more precisely, the intersection
of both projected leafs with an open disk around the point contains only that point.
Almost surely, no point is on the boundary of three or more projected confetti leafs since 
the first set in~\Ax{5} has measure zero. Almost surely, situation b) does not 
happen because the second set in~\Ax{5} has measure zero.)

We now see that the proof of Lemma~\ref{lem:limk} carries through verbatim if we make the substitutions:
$\norm{q-\pi(p_i)} \leq 1$ by $q \in \pi(p_i)+A$; $\norm{q-\pi(p_i)} < 1$ by $q \in \pi(p_i)+\inter A$;
$\norm{q-\pi(p_i)}<1-\eps$ by $B(q,\eps) \subseteq \pi(p_i)+A$; $\norm{q-\pi(p_j)}>1+\eps$ by $B(q,\eps) \subseteq \eR^2 \setminus(\pi(p_i)+A)$.
In proof of Lemma~\ref{lem:cts2} we used that (a.s.) $\Kcal_t(C)$ is finite for all $t$ and all cells $C$. 
That this is also the case in the current more general setting follows from the fact that the third set in \Ax{5} has Lebesgue measure zero.
The rest of the proof of Proposition~\ref{prop:kmain} then also carries through without any further changes.

In the proof of Proposition~\ref{prop:cross} the only things we may need to change are the constants 1000, 100, 50 and 2 
(the last constant occurring in the sentence after~\eqref{eq:delta1} and in the proof of Claim~\ref{clm:cruc}, as an upper bound on the distance needed 
between two points for their colours to be independent), to adjust for the new confetti 
shape $A$.  Multiplying these constants by $\max(\diam(A),1)$ will clearly do.

\subsection*{Acknowledgements}

I would like to thank Siamak Taati and Fabian Ziltener for helpful discussions related to the paper, and 
Ryan O'Donnell for helpful e-mail conversations. I thank the anonymous referees for corrections and suggestions that have 
improved the paper.

\bibliographystyle{plain}
\bibliography{ReferencesConfetti}

\appendix

\section{The proof of Proposition~\ref{prop:BourTobias}\label{sec:BourApp}}

Here we provide a proof of Proposition~\ref{prop:BourTobias}.
We will make use of an extension of Bourgain's sharp threshold result to general 
finite probability spaces that can be found in O'Donnell's new book~\cite{ODonnellBoek}.
Before we can state that result we need to give some more definitions.

Throughout the remainder of this section, $(V,\pi)$ will denote a finite probability space. That is, a finite set $V$ equipped with 
a probability measure $\pi$.
If $(V_1,\pi_1), \dots, (V_n,\pi_n)$ are such finite probability spaces, then we denote by
$\pi_1\otimes \dots \otimes \pi_n$ the probability distribution of the random vector $X = (X_1,\dots, X_n) \in V_1\times\dots\times V_n$ whose
coordinates are independent with $X_i \sim \pi_i$. 
If $\pi_1 = \dots = \pi_n = \pi$ then we also write $\pi^{\otimes n}$.
We define the {\em total influence} of a function $f : V_1\times\dots\times V_n \to \{0,1\}$ (wrt.~$\pi_1,\dots, \pi_n$) as

\[
 I(f) := \sum_{i=1}^n \Pee{\big[} f(X_1,\dots, X_n) \neq f(X_1,\dots,X_{i-1},Y_i,X_{i+1},\dots,X_n) {\big]}, 
\]

\noindent
where $(X_1,\dots,X_n),(Y_1,\dots, Y_n)\sim\pi_1\otimes\dots\otimes\pi_n$ are independent.
We remark that this definition of total influence differs from the definition of total influence used in several other texts 
such as~\cite{BollobasRiordanBoek}.

For $x \in V_1\times\dots\times V_n, T \subseteq \{1,\dots, n\}$, we denote by $x_T := (x_i)_{i\in T}$ the projection of $x$ onto the coordinates in $T$.
For $T \subseteq \{1,\dots, n\}$ and $\tau > 0$, we say that a vector $z \in \prod_{i\in T} V_i$ is a $\tau$-booster if 
$\Ee{\big[} f(X_1,\dots, X_n ) {\big|} X_i = z_i, \forall i \in T {\big]} \geq \Ee f(X_1,\dots,X_n) + \tau$.
For $\tau<0$, the vector $z \in \prod_{i\in T} V_i$ is a $\tau$-booster if instead 
$\Ee{\big[} f(X_1,\dots, X_n ) {\big|} X_i = z_i, \forall i \in T {\big]} \leq \Ee f(X_1,\dots,X_n) + \tau$.

We are now ready to present the generalized version of Bourgain's sharp threshold theorem.
The original version in~\cite{Friedgut99} was stated only for the case when $V = \{0,1\}$, but as noted in~\cite{ODonnellBoek}, the proof in fact 
generalizes to arbitrary $V$.
The following is a slight reformulation of the generalized version that can be found on page 303 of~\cite{ODonnellBoek}.
%(There some bounds on the constants involved are given, that we omit since we have no used for them in the present conext.)

\begin{theorem}[\cite{ODonnellBoek}]\label{thm:Bourgain}
There exist absolute constants $c_1, c_2 > 0$ such that the following holds
for every finite set $V$, every probability measure $\pi$ on $V$, every integer $n \in \eN$ and every function $f : V^n \to \{0,1\}$.
If $\Var( f(X) ) \geq \frac{1}{400}$ then either 
\begin{itemize}
 \item[{\bf(a)}] $\Pee{\big[} \exists T \subseteq \{1,\dots,n\}, |T| \leq c_1 I(f), \text{ such that $X_T$ is a $\tau$-booster}{\big]}
 \geq \tau$, or
 \item[{\bf(b)}] $\Pee{\big[} \exists T \subseteq \{1,\dots,n\}, |T| \leq c_1 I(f), \text{ such that $X_T$ is a $(-\tau)$-booster}{\big]}
 \geq \tau$,
\end{itemize}
where $\tau = \exp[ -c_2 I^2(f) ]$ and $X = (X_1,\dots,X_n) \sim \pi^{\otimes n}$.
\end{theorem}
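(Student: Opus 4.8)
This is (a reformulation of) Bourgain's sharp threshold theorem in the generality of product probability spaces, so rather than inventing something new the plan is to run its standard Fourier-analytic proof while tracking that the constants stay independent of $V$, $\pi$ and $n$. Fix $f\colon V^n\to\{0,1\}$ with $\Var(f(X))\ge\tfrac1{400}$ and work with the Efron--Stein decomposition $f=\sum_{S\subseteq[n]}f^{=S}$ relative to $\pi^{\otimes n}$, where $f^{=S}$ depends only on the coordinates in $S$ and averages to $0$ over each of them; write $W^{=k}[f]=\sum_{\abs S=k}\Ee[(f^{=S})^2]$, $f^{\le k}=\sum_{\abs S\le k}f^{=S}$ and $\mathrm{Inf}_i(f)=\sum_{S\ni i}W^{=S}[f]$. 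A short computation shows that for $\{0,1\}$-valued $f$ the resampling influence of coordinate $i$ (the $i$-th summand in the definition of $I(f)$) equals $2\,\mathrm{Inf}_i(f)$, so $I(f)=2\sum_k k\,W^{=k}[f]$, and in particular $\sum_i\mathrm{Inf}_i(f)=\tfrac12 I(f)$.

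First I would truncate: set $k:=\lceil C\,I(f)\rceil$ for a large absolute constant $C$. By Markov's inequality $W^{>k}[f]=\Ee[(f-f^{\le k})^2]\le I(f)/(2k)\le 1/(2C)$, so $f^{\le k}$ is within an arbitrarily small absolute constant of $f$ in $L^2$; consequently the nonconstant low-degree part $g:=f^{\le k}-\Ee f$ has $\Ee[g^2]=\Var(f)-W^{>k}[f]$ still bounded below by an absolute constant.

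The heart of the proof is then to feed $g$ into hypercontractivity and localise. For general $\pi$-bounded spaces the hypercontractive constant degrades with $\min_v\pi(v)$, which a priori would make $c_1,c_2$ depend on $\pi$; removing that dependence is the delicate point, and it is carried out in \cite{ODonnellBoek} by using a version of the hypercontractive inequality that is strong enough for the argument yet has absolute constants (intuitively, the badly skewed coordinates that would spoil ordinary hypercontractivity contribute parts of $f$ that are nearly constant and can be folded into the error term). Granting this, $g$ satisfies an estimate $\Ee[g^4]\le\Lambda^{k}\Ee[g^2]^2$ with an absolute $\Lambda>1$, and Paley--Zygmund gives that $\abs g$ exceeds a fixed fraction of $\|g\|_2$ with probability at least $\Lambda^{-O(k)}$. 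Since $f^{\le k}$ is within $o(1)$ in $L^2$ of the $\{0,1\}$-valued $f$, this deviation of $f^{\le k}$ can be transferred — by a localisation/random-restriction argument that uses the low degree of $g$ together with the influence budget $\sum_i\mathrm{Inf}_i(f)=\tfrac12 I(f)$ to keep the relevant coordinate set of size $\le c_1 I(f)$ — into a genuine deviation of $\Ee[f\mid X_T]$ from $\Ee f$; averaging over $X$ this upgrades ``a booster exists'' to ``a random $X_T$ with $\abs T\le c_1 I(f)$ is a $\tau$-booster of one sign or the other with probability $\ge\tau$'', which is conclusion (a) or (b). The quadratic dependence $\tau=\exp(-c_2 I^2(f))$ is forced by combining the truncation level $k\asymp I(f)$ with the exponential-in-$k$ losses in the hypercontractive and localisation steps.

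I expect the main obstacle to be exactly the hypercontractivity step with constants independent of $\abs V$ and $\pi$: isolating the precise consequence of general-product-space hypercontractivity that the argument needs and performing the reduction that launders away the $\min_v\pi(v)$ dependence, together with the careful bookkeeping in the localisation step that turns the anti-concentration of $g$ into an honest booster of size $\le c_1 I(f)$ with the claimed probability. The Efron--Stein set-up and the truncation are routine.
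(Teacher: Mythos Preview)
The paper does not prove Theorem~\ref{thm:Bourgain}; it simply cites it from O'Donnell's book~\cite{ODonnellBoek} (page 303), with a couple of remarks reconciling the $\{0,1\}$ versus $\{-1,1\}$ conventions and the two definitions of total influence. So there is no in-paper proof to compare against.

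That said, your sketch is an accurate high-level outline of the standard Fourier-analytic proof that one finds in the cited reference: Efron--Stein decomposition, Markov truncation to level $k\asymp I(f)$, a hypercontractivity/Paley--Zygmund anti-concentration step for the low-degree part, and a localisation argument converting anti-concentration into boosters. You have also correctly flagged the one genuinely subtle point, namely obtaining constants independent of $\pi$ and $|V|$ in the hypercontractive step; this is precisely what the generalized treatment in~\cite{ODonnellBoek} supplies, and your sketch rightly defers to it rather than trying to reinvent it. The identities $I(f)=2\sum_i \mathrm{Inf}_i(f)$ and $\sum_i \mathrm{Inf}_i(f)=\sum_k k\,W^{=k}[f]$ are correct for $\{0,1\}$-valued $f$ and match the paper's remark that its resampling-based $I(f)$ agrees with the Fourier-theoretic total influence via Proposition~8.24 of~\cite{ODonnellBoek}. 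In short: your proposal is consistent with the cited source the paper relies on, and the paper itself offers nothing further.
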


We have chosen to consider functions with values in $\{0,1\}$, while in~\cite{ODonnellBoek}
the theorem is stated in terms of $\{-1,1\}$-valued functions. 
It is however clear that if $f$ is $\{0,1\}$-valued then $g := 2f -1$ is $\{-1,1\}$-valued, 
$\Var(g) = 4\Var(f)$ and $x$ is a $\tau$-booster for $f$ if and only if it is a $2\tau$-booster
for $g$.
We should also mention that our definiton of total influence is different from the one given in~\cite{ODonnellBoek}. 
%but equivalent to it and more convenient
%for our purposes.
That the two definitions are equivalent follows Proposition 8.24 on page 204 of~\cite{ODonnellBoek}.

The last theorem also extends to asymmetric situations.

\begin{corollary}~\label{cor:asymBourgain}
There exist absolute constants $c_1, c_2 > 0$ such that the following holds
for all $n \in \eN$, all $(V_1, \pi_1), \dots, (V_n,\pi_n)$ and every function $f : V_1\times\dots\times V_n \to \{0,1\}$. \\
If $\Var(f) \geq \frac{1}{400}$ then either 
\begin{itemize}
 \item[{\bf(a)}] $\Pee{\big[} \exists T \subseteq \{1,\dots,n\}, |T| \leq c_1 I(f), \text{ such that $X_T$ is a $\tau$-booster}{\big]}
 \geq \tau$, or
 \item[{\bf(b)}] $\Pee{\big[} \exists T \subseteq \{1,\dots,n\}, |T| \leq c_1 I(f), \text{ such that $X_T$ is a $(-\tau)$-booster}{\big]}
 \geq \tau$,
\end{itemize}
where $\tau = \exp[ -c_2 I^2(f) ]$ and $X = (X_1,\dots,X_n)\sim\pi_1\otimes\dots\otimes\pi_n$.
\end{corollary}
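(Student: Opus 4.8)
The plan is to reduce the asymmetric statement to the symmetric case, Theorem~\ref{thm:Bourgain}, by a standard diagonal-embedding trick that repackages the $n$ different probability spaces as a single power.

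Concretely, I would set $V := V_1 \times \dots \times V_n$ and $\pi := \pi_1 \otimes \dots \otimes \pi_n$, viewed as one finite probability space, and define $g : V^n \to \{0,1\}$ by $g(x^{(1)},\dots,x^{(n)}) := f(x^{(1)}_1, x^{(2)}_2, \dots, x^{(n)}_n)$; that is, $g$ reads off the $j$-th coordinate of its $j$-th block and feeds the resulting ``diagonal'' vector into $f$. If $X = (X^{(1)},\dots,X^{(n)}) \sim \pi^{\otimes n}$ then the diagonal $(X^{(1)}_1,\dots,X^{(n)}_n)$ has distribution $\pi_1 \otimes \dots \otimes \pi_n$, so $g(X)$ has the same law as $f$ evaluated at a $\pi_1\otimes\dots\otimes\pi_n$-distributed vector; in particular $\Var(g) = \Var(f)$. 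Moreover, resampling the entire $j$-th block $X^{(j)}$ changes the value of $g$ in exactly the same situations in which resampling the single coordinate $X^{(j)}_j$ changes $f$, because $g$ ignores the off-diagonal coordinates of the blocks; hence each summand of $I(g)$ equals the corresponding summand of $I(f)$, and $I(g) = I(f)$ exactly.

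I would then apply Theorem~\ref{thm:Bourgain} to $g$ over the space $(V,\pi)$; this is legitimate since $\Var(g) = \Var(f) \geq \tfrac{1}{400}$. With the absolute constants $c_1, c_2$ supplied by that theorem and $\tau = \exp[-c_2 I^2(g)] = \exp[-c_2 I^2(f)]$, we obtain that with probability at least $\tau$ there is a set $T \subseteq \{1,\dots,n\}$ of blocks with $|T| \leq c_1 I(g) = c_1 I(f)$ such that $X_T$ is a $\tau$-booster for $g$, or the same statement holds with $(-\tau)$ in place of $\tau$. Finally I would transfer a booster for $g$ to one for $f$: conditioning on $X^{(j)} = z^{(j)}$ for all $j \in T$ affects $g$ only through the values $z^{(j)}_j$, so $\Ee[g(X) \mid X^{(j)} = z^{(j)}, j \in T] = \Ee[f(W) \mid W_j = z^{(j)}_j, j \in T]$ where $W \sim \pi_1\otimes\dots\otimes\pi_n$, while $\Ee g = \Ee f$; thus $(z^{(j)}_j)_{j \in T}$ is a $\tau$-booster (resp.\ $(-\tau)$-booster) for $f$ on the same index set $T$. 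Coupling the $\pi_1\otimes\dots\otimes\pi_n$-vector $(X^{(1)}_1,\dots,X^{(n)}_n)$ to the block vector $X \sim \pi^{\otimes n}$, the event that some admissible $T$ yields a booster for $g$ is contained in the corresponding event for $f$, so the probability lower bound carries over and the corollary follows with the same $c_1, c_2$.

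This is essentially bookkeeping, so I do not anticipate a real obstacle; the one place to be careful is confirming that $I(g) = I(f)$ holds on the nose rather than merely up to constants (both the cardinality bound $|T| \le c_1 I(f)$ and the value of $\tau$ in the conclusion depend on it) and that the booster translation keeps the index set $T$ unchanged, so that this bound is inherited verbatim.
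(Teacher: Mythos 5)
Your argument is exactly the paper's proof: the same diagonal embedding $g(z_1,\dots,z_n) := f(z_{1,1},\dots,z_{n,n})$ into $V^n$ with $V = V_1\times\dots\times V_n$ and $\pi = \pi_1\otimes\dots\otimes\pi_n$, the same verification that $I(g)=I(f)$ on the nose, and the same transfer of boosters back to $f$ with $T$ unchanged. No differences worth noting.
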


\begin{proof}
We set $V := V_1\times\dots\times V_n, \pi = \pi_1\otimes\dots\otimes\pi_n$ and 
we define $g : V^n \to \{0,1\}$ by $g(z_1,\dots,z_n) := f( z_{1,1},\dots, z_{n,n} )$, 
where $z_{i,j}$ denotes the $j$-th coordinates of $z_i$.
Clearly, Theorem~\ref{thm:Bourgain} applies to this new situation.

If $Z_1, \dots, Z_n, Z_1', \dots, Z_n' \sim \pi$ are independent, then, for every $1\leq i \leq n$

\[ \begin{array}{c}
 \Pee{\big[} g(Z_1,\dots,Z_n) \neq g(Z_1,\dots,Z_{i-1},Z_i',Z_{i+1},\dots, Z_{n}){\big]} \\
  =  \\
 \Pee{\big[} f(Z_{1,1},\dots,Z_{n,n}) \neq f(Z_{11},\dots,Z_{i-1,i-1},Z_{i,i}',Z_{i+1,i+1},\dots, Z_{n,n}){\big]}. \\
%  = \\ 
%  \sum_{i=1}^n \Pee{\big[} f(X_1,\dots, X_n) \neq f(X_1,\dots,X_{i-1},Y_i,X_{i+1},\dots,X_n) {\big]}, 
\end{array} 
\]

\noindent
Since $(Z_{1,1},\dots,Z_{n,n}), (Z_{1,1}', \dots, Z_{n,n}') \sim \pi_1\otimes\dots\otimes\pi_n$ by construction, it follows that
$I(g) = I(f)$.
Similary, taking $X = (X_1,\dots,X_n)\sim\pi_1\otimes\dots\otimes\pi_n$, we find that $\Ee g(Z_1,\dots,Z_n) = \Ee f(X_1,\dots, X_n)$ and 

\[ \begin{array}{c}
\Pee{\big[} \exists T \subseteq \{1,\dots,n\}, |T| \leq c_1 I(g), \text{ such that $Z_T$ is a $\tau$-booster wrt.~$g$}{\big]} \\
= \\
\Pee{\big[} \exists T \subseteq \{1,\dots,n\}, |T| \leq c_1 I(f), \text{ such that $X_T$ is a $\tau$-booster wrt.~$f$}{\big]},
\end{array} \]

\noindent
where $c_1$ is as provided by Theorem~\ref{thm:Bourgain}.
This concludes the proof of the corollary.
\end{proof}

Let us note that the number $\frac{1}{400}$ in Theorem~\ref{thm:Bourgain} and Corollary~\ref{cor:asymBourgain}
could have been replaced by any other number (at the expense of changing the constants $c_1, c_2$). 
This can of course be seen from the proof of Theorem~\ref{thm:Bourgain}, but it can also be derived from the statement 
in a straightforward way. For completeness we prefer to spell out the details.

\begin{corollary}\label{cor:bouralpha}
For every $0 < \alpha < 1/2$ and $C>0$, there exist constants $K = K(\alpha,C) \in \eN, \delta = \delta(\alpha, C) > 0$ such 
that the following holds
for all $n \in \eN$, all $(V_1, \pi_1), \dots, (V_n,\pi_n)$ and every function $f : V_1\times\dots\times V_n \to \{0,1\}$. \\
If $\Pee[ f(X)=1 ] \in (\alpha,1-\alpha)$ and $I(f) \leq C$ then there exist indices $1\leq i_1,\dots, i_K \leq n$ and
values $x_{i_1} \in V_{i_1}, \dots, x_{i_K} \in V_{i_K}$ such that one of the following holds:
\begin{itemize}
 \item[{\bf(a)}] $\Pee{\big[} f(X) = 1 | X_{i_1} = x_{i_1}, \dots, X_{i_K}=x_{i_K} {\big]}
 \geq \Pee{\big[} f(X) = 1 {\big]} + \delta$, or
 \item[{\bf(b)}] $\Pee{\big[} f(X) = 1 | X_{i_1} = x_{i_1}, \dots, X_{i_K}=x_{i_K} {\big]}
 \leq \Pee{\big[} f(X) = 1 {\big]} - \delta$,
\end{itemize}
where $X = (X_1,\dots,X_n) \sim \pi_1\otimes\dots\otimes\pi_n$.
\end{corollary}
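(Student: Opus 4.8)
The plan is to deduce Corollary~\ref{cor:bouralpha} from Corollary~\ref{cor:asymBourgain} by an amplification step that upgrades the weak hypothesis $\Pee[f(X)=1]\in(\alpha,1-\alpha)$ --- under which $\Var(f)$ can be far below $1/400$ when $\alpha$ is tiny --- to a genuine constant lower bound on the variance, at the price of replacing $f$ by the logical OR of a bounded number of independent copies of itself. Write $q:=\Pee[f(X)=1]$. Replacing $f$ by $1-f$ if necessary --- this leaves $I(f)$ unchanged, sends $q$ to $1-q$, and interchanges conclusions (a) and (b) --- we may assume $q\le 1/2$.

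First I would let $r\ge 1$ be the least integer with $1-(1-q)^r\ge 1/3$ and let $g(x^{(1)},\dots,x^{(r)}):=1-\prod_{j=1}^r(1-f(x^{(j)}))$ on the $nr$-fold product built from $r$ copies of $(V_1,\pi_1),\dots,(V_n,\pi_n)$. Two elementary facts drive the argument. (i) Minimality of $r$ gives $(1-q)^{r-1}>2/3$ when $r\ge 2$ (and then $q<1/3$, since $q\ge 1/3$ forces $r=1$), so $\Pee[g(X)=1]=1-(1-q)^r\in[1/3,2/3)$ and hence $\Var(g)\ge 2/9>1/400$; and since $q>\alpha$, minimality also yields $r\le r_0(\alpha):=\lceil \ln(3/2)/\ln(1/(1-\alpha))\rceil+1<\infty$. (ii) The total influence of an OR satisfies $I(g)=r(1-q)^{r-1}I(f)\le r_0(\alpha)\,C$: resampling the $i$-th coordinate of the $j$-th copy flips $g$ exactly when it flips $f(x^{(j)})$ and all $r-1$ other copies evaluate to $0$; these two events are independent, the latter has probability $(1-q)^{r-1}$, and the former contributes $I(f)$ when summed over $i$ within a copy.

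Next I would apply Corollary~\ref{cor:asymBourgain} to $g$: since $\Var(g)\ge 1/400$ it produces $\tau=\exp[-c_2 I(g)^2]\ge\tau_1:=\exp[-c_2(r_0(\alpha)C)^2]>0$, and as the probability there is at least $\tau>0$ the underlying event is nonempty, so there genuinely exist a coordinate set $T$ of $g$'s space with $|T|\le c_1 I(g)\le c_1 r_0(\alpha)C$ and an assignment $z_T$ that is a $\tau$-booster for $g$ (I treat the upward case; the downward one is identical). Write $T=T_1\cup\cdots\cup T_r$ according to copies, let $z^{(j)}$ be the part of $z_T$ in copy $j$, and set $q_j:=\Pee[f(X)=1\mid X_{T_j}=z^{(j)}]$. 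Independence of the copies gives $\Ee[g\mid X_T=z_T]=1-\prod_j(1-q_j)$, so the inequality $\Ee[g\mid X_T=z_T]\ge\Ee[g]+\tau$ becomes $(1-q)^r-\prod_j(1-q_j)\ge\tau$. Fix $\delta:=\min\{\alpha,\tau_1/(2r_0(\alpha))\}$ (a function of $\alpha,C$ only) and suppose $q_j\le q+\delta$ for all $j$; since $\delta\le\alpha\le 1-q$ every factor $1-q_j\ge 1-q-\delta$ lies in $[0,1]$, so $\prod_j(1-q_j)\ge(1-q-\delta)^r$ and the mean value theorem gives $(1-q)^r-\prod_j(1-q_j)\le(1-q)^r-(1-q-\delta)^r\le r\delta\le r_0(\alpha)\delta<\tau_1\le\tau$, a contradiction. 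Hence some $q_j\ge q+\delta$; the indices of $T_j$ together with the values $z^{(j)}$, padded by repeating one index to a total of exactly $K:=\lceil c_1 r_0(\alpha)C\rceil$ (harmless, since repeated conditioning is idempotent), give conclusion (a). The downward case gives (b) by the same computation with $q-\delta$ replacing $q+\delta$, using that $q>\alpha\ge\delta$.

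The main obstacle is the amplification itself: one must find a single operation on Boolean functions that (1) lifts the variance above an absolute constant no matter how small $\alpha$ is, (2) uses only $O_\alpha(1)$ copies, so that the total influence stays $O_\alpha(C)$ and Corollary~\ref{cor:asymBourgain} keeps a nontrivial $\tau$, and (3) lets one transfer a booster of the amplified function back to a booster of $f$ with only a bounded loss in the boost. Taking OR (and AND --- i.e.\ the symmetry $f\mapsto 1-f$ --- when $q\ge 1/2$) does all three; the delicate points are the influence identity for an OR and the mean-value back-translation, in particular respecting the one-sidedness of boosters and keeping $q\pm\delta$ inside $[0,1]$.
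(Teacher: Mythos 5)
Your proof is correct and follows essentially the same route as the paper's: amplify the variance above an absolute constant by combining an $\alpha$-bounded number of independent copies of $f$, apply Corollary~\ref{cor:asymBourgain} to the amplified function, and transfer the resulting booster back to a single copy by factoring the conditional expectation over the copies. The only cosmetic difference is that you take the OR of copies (after pushing $q=\Ee f$ below $1/2$) where the paper takes the AND (after pushing $\Ee f$ above $1-\alpha_0$), and you track the factor $r_0(\alpha)$ in the total-influence bound on the amplified function a bit more explicitly than the paper's displayed constants do.
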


\begin{proof}
Let us first remark that $\Var(f) \leq \frac{1}{400}$ if and only if $\Ee f \in (\alpha_0, 1-\alpha_0)$, where
$\alpha_0 := (1-\sqrt{99/100})/2 \approx .0025$ is the smaller of the two solutions to $x(1-x)=\frac{1}{400}$.
Hence, if $\Ee f \in (\alpha_0,1-\alpha_0)$ then the result is an immediate
consequence of Corollary~\ref{cor:asymBourgain}, setting $K := c_1 \cdot C$ and $\delta := \exp[ - c_2 \cdot C^2 ]$.

Let us thus assume that ($\alpha < \alpha_0$ and) $\Ee f \in (\alpha,\alpha_0]\cup[1-\alpha_0,1-\alpha)$.
Switching to $g := 1-f$ if necessary (observe that this transformation leaves the total influence intact, and
{\bf(a)} holds for $f$ if and only if ${\bf(b)}$ holds for $g$, and similarly with $f,g$ switched), we can assume 
without loss of generality that $1-\alpha_0 \leq \Ee f < 1-\alpha$.
%(Otherwise, if $\alpha \leq \Pee( f=1 ) < \alpha_0$ then we switch to $g=1-f$. 
%Observe that $I(g) = I(f)$ and {\bf(a)}, resp.~{\bf(b)}, holds for $f$ if and only if {\bf(b)}, resp.~{\bf(a)}, holds for $g$.)

We now set $k :=  \lceil\log(1-\alpha_0)/\log(\Ee f)\rceil$ and $k_0 := \lceil \log(1-\alpha_0)/\log(1-\alpha)\rceil$ (Observe that
$k\leq k_0$ and $k_0$ depends only on $\alpha$ but not on $n, f$, the $V_i$-s or $\pi_i$-s.). 
We define

\[ g := \prod_{i=1}^k f(X_{(i-1)n+1},\dots,X_{in}), \] 

\noindent
where $X_1,\dots,X_{kn}$ are independent with $X_{jn+i} \sim \pi_i$ for all $0\leq j\leq k-1, 1\leq i \leq n$.
We have

\[ \Ee g = (\Ee f)^k \in (\alpha_0, 1-\alpha_0), \]

\noindent
so that Corollary~\ref{cor:asymBourgain} applies to $g$.
Hence there are indices $0 \leq i_1, \dots, i_{K} \leq kn$ and values $x_{i_1}, \dots, x_{i_K}$ such that 

\[ \left| \Ee{\big[} g | X_{i_1} = x_{i_1}, \dots, X_{i_K} = x_{i_K} {\big]} - \Ee g\right| \geq \tau, \]

\noindent
where $K := c_1\cdot C$ and $\tau = \exp[ -c_1\cdot C^2]$ with $c_1, c_2$ as provided by Corollary~\ref{cor:asymBourgain}.

Let us first suppose that $\Ee{\big[} g | X_{i_1} = x_{i_1}, \dots, X_{i_K} = x_{i_K} {\big]} \geq \Ee g + \tau$. 
We have

\[ \begin{array}{c}
\Ee{\big[} g(X_1,\dots,X_{kn}) | X_{i_1} = x_{i_1}, \dots, X_{i_K} = x_{i_K} {\big]} \\
 = \\
 \prod_{j=1}^k  \Ee{\big[} f( X_{(j-1)n+1},\dots, X_{jn}) | X_{i_1} = x_{i_1}, \dots, X_{i_K} = x_{i_K} {\big]}.
\end{array} \]

There must be a $ 1\leq j \leq k$ such that $\Ee{\big[} f( X_{(j-1)n+1},\dots, X_{jn}) | X_{i_1} = x_{i_1}, \dots, X_{i_K} = x_{i_K} {\big]}
\geq \Ee f + \frac{\tau}{k_02^{k_0}}$. This is because otherwise we would have

\[ \Ee{\big[} g | X_{i_1} = x_{i_1}, \dots, X_{i_K} = x_{i_K} {\big]} \leq 
\left(\Ee f + \frac{\tau}{k_02^{k_0}}\right)^k \leq (\Ee f)^k + \tau/2, \]

\noindent
using that $\frac{\dd}{\dd x}(\Ee f + x)^k$ is at most $k2^{k-1}$ for 
$x \in [0,1]$, so that $(\Ee f + x)^k \leq (\Ee f)^k + x k2^{k-1}$ for all $x\in[0,1]$.
Relabelling if necessary, we can assume without loss of generality that $j=1$.
Hence we have

\[ \Ee{\big[}f(X_1,\dots,X_n){\big |} X_{i_1}=x_{i_1},\dots. X_{i_K} = x_{i_K} {\big ]}
 \geq \Ee f(X_1,\dots, X_n) + \frac{\tau}{k_02^{k_0}}.
\]

\noindent
It may be that some indices $i_j$ are bigger than $n$. But in that case the value
$X_{i_j}$ is irrelevant to $f(X_1,\dots, X_n)$. To match into the framework of the Corollary we can just set $i_j = i_{j'}$ and 
$x_{i_j} = x_{i_{j'}}$ for some index $i_{j'} \leq n$.

Suppose then that $\Ee{\big[} g | X_{i_1} = x_{i_1}, \dots, X_{i_K} = x_{i_K} {\big]} \leq \Ee g - \tau$.
Similarly to before, there is a $ 1\leq j \leq k$ such that 
$\Ee{\big[} f( X_{(j-1)n+1},\dots, X_{jn}) | X_{i_1} = x_{i_1}, \dots, X_{i_K} = x_{i_K} {\big]}
\leq \Ee f - \frac{\tau}{k_02^{k_0}}$. We can continue as in the previous case.

This proves that the corollary indeed holds with $K = c_1\cdot C$ and $\delta = \exp[-c_2\cdot C^2]/k_02^{k_0}$.
\end{proof}

Our next ingredient is the well-known Margulis-Russo formula. 
A proof can for instance be found in~\cite{BollobasRiordanBoek}, where it appears as Lemma 9 on page 46.

\begin{lemma}[Margulis-Russo formula]
 Suppose that $f:\{0,1\}^n \to \{0,1\}$ is non-decreasing (coordinatewise). For $\overline{p} = (p_1,\dots,p_i)\in(0,1)^n$ we have
 
 \[ \frac{\partial}{\partial p_i} \Pee_{\overline{p}}[ f(X_1,\dots,X_n)=1 ]
  = \frac{1}{2p_i(1-p_i)}\Pee_{\overline{p}}[ f(X_1,\dots,X_n) \neq f(X_1,\dots,X_{i-1}, Y, X_{i+1},\dots,X_n) ], 
 \]

 \noindent
 where $X_1,\dots, X_n, Y$ are independent and $X_j \sim \Be(p_j)$ for all $1\leq j\leq n$ and $Y\sim\Be(p_i)$.
\end{lemma}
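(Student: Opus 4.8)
The plan is to rewrite both sides of the asserted identity in terms of the probability that coordinate $i$ is \emph{pivotal}, meaning that changing $X_i$ alters the value of $f$.

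First I would compute the left-hand side. Fix $i$, and for $b\in\{0,1\}$ write $X^{i\to b}$ for the vector $X=(X_1,\dots,X_n)$ with its $i$-th coordinate replaced by $b$. Conditioning on $X_{-i}:=(X_j)_{j\neq i}$, whose joint law does not involve $p_i$, and using that $X_i\isd\Be(p_i)$ is independent of $X_{-i}$, one gets
\[ \Pee_{\overline{p}}[f(X)=1] = \Ee\big[\,p_i\,f(X^{i\to 1}) + (1-p_i)\,f(X^{i\to 0})\,\big] = \Ee[f(X^{i\to 0})] + p_i\cdot\Ee\big[f(X^{i\to 1})-f(X^{i\to 0})\big]. \]
Both expectations on the right depend only on $X_{-i}$, hence are constants with respect to $p_i$, so differentiating gives $\frac{\partial}{\partial p_i}\Pee_{\overline{p}}[f(X)=1] = \Ee[f(X^{i\to 1})-f(X^{i\to 0})]$. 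Since $f$ is non-decreasing, $f(X^{i\to 1})-f(X^{i\to 0})\in\{0,1\}$ and equals $1$ exactly on the event $P_i:=\{f(X^{i\to 0})\neq f(X^{i\to 1})\}$, which is measurable with respect to $X_{-i}$ alone; thus the left-hand side equals $\Pee_{\overline{p}}[P_i]$.

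Next I would compute the right-hand side. Let $X':=(X_1,\dots,X_{i-1},Y,X_{i+1},\dots,X_n)$ with $Y\isd\Be(p_i)$ independent of $X$. If $X_i=Y$ then $X'=X$, so $\{f(X)\neq f(X')\}\subseteq\{X_i\neq Y\}$; and on $\{X_i\neq Y\}$ we have $\{X_i,Y\}=\{0,1\}$, so $f(X)\neq f(X')$ precisely when $P_i$ holds. Hence $\{f(X)\neq f(X')\}=P_i\cap\{X_i\neq Y\}$. Now $P_i$ is a function of $X_{-i}$ only, while $X_i$ and $Y$ are independent of $X_{-i}$ and of each other, so these two events are independent and
\[ \Pee_{\overline{p}}[f(X)\neq f(X')] = \Pee_{\overline{p}}[P_i]\cdot\Pee[X_i\neq Y] = 2p_i(1-p_i)\cdot\Pee_{\overline{p}}[P_i]. \]
Dividing by $2p_i(1-p_i)$, which is legitimate because $p_i\in(0,1)$, and comparing with the formula obtained for the left-hand side completes the proof.

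I do not expect any genuine obstacle here: the argument is elementary conditioning. The only points that need a little care are verifying that the pivotal event $P_i$ lives on the coordinates $X_{-i}$ — so that it is constant in $p_i$ and independent of the pair $(X_i,Y)$ — and invoking monotonicity at the one place where it is used, namely to replace the signed difference $f(X^{i\to 1})-f(X^{i\to 0})$ by the indicator of $P_i$.
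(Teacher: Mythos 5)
Your proof is correct and complete. Note, however, that the paper does not actually prove this lemma; it only cites Bollob\'as and Riordan's book (Lemma 9, page 46 of~\cite{BollobasRiordanBoek}), so there is no ``paper proof'' to compare against. Your argument is the standard one: condition on $X_{-i}$ to see that $\Pee_{\overline p}[f(X)=1]$ is affine in $p_i$ with slope $\Ee[f(X^{i\to 1})-f(X^{i\to 0})]$, use monotonicity to identify that slope with $\Pee[P_i]$, and then on the other side factor $\{f(X)\neq f(X')\}=P_i\cap\{X_i\neq Y\}$ and use independence of $\sigma(X_{-i})$ from $\sigma(X_i,Y)$ to pull out the factor $\Pee[X_i\neq Y]=2p_i(1-p_i)$. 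All the small points you flag as needing care (that $P_i$ is $X_{-i}$-measurable, hence constant in $p_i$ and independent of $(X_i,Y)$; that monotonicity is needed to turn the signed difference into an indicator) are exactly the right ones, and you handle them correctly.
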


% \begin{proof}
% Let us set 
% 
% \[ \begin{array}{rcl}
% A_0 := \{ (x_1,\dots,x_{i-1},x_{i+1},\dots,x_n) \in \{0,1\}^{n-1} & : & f(x_1,\dots,x_{i-1},0,x_{i+1},\dots,x_n)=1 \}, \\
% & & \\
% A_1 := \{ (x_1,\dots,x_{i-1},x_{i+1},\dots,x_n) \in \{0,1\}^{n-1} & : & f(x_1,\dots,x_{i-1},0,x_{i+1},\dots,x_n)=0,  \\
%  & & f(x_1,\dots,x_{i-1},1,x_{i+1},\dots,x_n)=1 \}.
% \end{array} \]
% 
% \noindent
% It follows from nondecreasingness that if $f(x_1,\dots,x_n) =1$ then 
% $(x_1,\dots,x_{i-1},x_{i+1},\dots,x_n) \in A_0\cup A_1$.
% We observe that
% 
% \[ 
% \Pee[ f(X_1,\dots, X_n) = 1 ]
%  =
% %\sum_{(x_1,\dots,x_{i-1},x_{i+1},\dots,x_n) \in A_0} 
% \sum_{\overline{x} \in A_0}
% \prod_{j\neq i} p_j^{x_j} (1-p_j)^{1-x_j}
% + 
% %\sum_{(x_1,\dots,x_{i-1},x_{i+1},\dots,x_n) \in A_1} 
% \sum_{\overline{x} \in A_1} p_i \prod_{j\neq i} p_j^{x_j} (1-p_j)^{1-x_j}.
% \]
% 
% \noindent
% Hence we have 
% 
% \begin{equation}\label{eq:cashew1} 
% \frac{\partial}{\partial p_i} \Pee[ f(X_1,\dots,X_n)=1 ] = 
% %\sum_{(x_1,\dots,x_{i-1},x_{i+1},\dots,x_n) \in A_1} 
% \sum_{\overline{x}\in A_1}\prod_{j\neq i} p_j^{x_j} (1-p_j)^{1-x_j}.
% \end{equation}
% 
% On the other hand, we have that 
% 
% \begin{equation}\label{eq:cashew2}
% \begin{array}{c}
% \Pee[ f(X_1,\dots,X_n) \neq f(X_1,\dots,X_{i-1}, Y, X_{i+1},\dots,X_n) ] \\
%  = \\
%  \displaystyle 
%  2 p_i (1-p_i) %\sum_{(x_1,\dots,x_{i-1},x_{i+1},\dots,x_n) \in A_1} 
%  \sum_{\overline{x}\in A_1}\prod_{j\neq i} p_j^{x_j} (1-p_j)^{1-x_j}.
% \end{array}
% \end{equation}
% 
% \noindent
% The result follows immediately from~\eqref{eq:cashew1} and~\eqref{eq:cashew2}.
% \end{proof}

We now have all the ingredients for a quick proof of Proposition~\ref{prop:BourTobias}. For completeness we spell out the details.

\begin{proofof}{Proposition~\ref{prop:BourTobias}}
The up-set $A \subseteq \{0,1\}^n$ corresponds to a function $f : \{0,1\}^n \to \{0,1\}$ that is
one if and only if its input is in $A$. This function $f$ is non-decreasing since $A$ is an up-set.
By the Margulis-Russo formula we have that 

\[ I(f) = 2 \sum_{i=1}^n p_i(1-p_i) \cdot \frac{\partial}{\partial p_i}\Pee_{\overline{p}}{\big[} (X_1,\dots, X_n) \in A {\big]}. \]

\noindent
Also observe that, since $f$ is non-decreasing, we have 

\[ \begin{array}{c}
\Pee( f(X) = 1 | X_{i_1}=\dots =X_{i_K} = 1 ) \\
\geq \\
\Pee( f(X) = 1 | X_{i_1}=x_{i_1},\dots, X_{i_K} = x_{i_K} ) \\
\geq \\
\Pee( f(X) = 1 | X_{i_1}=\dots =X_{i_K} = 0 ), 
\end{array} \]

\noindent
for all $x_{i_1}, \dots, x_{i_K} \in \{0,1\}$.

Proposition~\ref{prop:BourTobias} is thus a direct corollary of Corollary~\ref{cor:bouralpha}.
\end{proofof}

\section{The proof of Lemma~\ref{lem:dini}\label{sec:dini}}

\begin{proofof}{Lemma~\ref{lem:dini}}
Let us fix $\eps>0$.
By continuity of $f$, for each $x\in I$ there is a set $U_x = [a_1,b_1]\times\dots\times[a_d,b_d] \subseteq I$ such that 
$x \in O_x := \inter_I( U_x )$ (we take the interior in the relative topology of $I$), and
$|f(x')-f(x)| < \eps/2$ for all $x'\in U_x$.
Since $f_k \to f$ pointwise and $f_{k+1} \geq f_k$, there is a $k_0(x)$ such that 
$f_k(a_1,\dots, a_d) \geq f(a_1,\dots, a_d) -\eps/2$ for all $k \geq k_0(x)$.
By the nondecreasingness of $f_k$ we also have that 
$f_k(x') \geq f(a_1,\dots,a_d) - \eps/2 \geq f(x')-\eps$ for all $x' \in U_x$ and all $k\geq k_0(x)$.

Since $I$ is compact and $\{ O_x : x\in I\}$ is an open cover of $I$, there must be a finite subcover
$\{ O_{x_1},\dots, O_{x_N}\}$ of $I$.
Setting $k_0 := \max( k_0(x_1), \dots, k_0(x_N) )$, it is clear that 
$f_k(x) \geq f(x)-\eps$ for every $x\in I$ and every $k\geq k_0$.
Also observe that $f_k(x) \leq f(x)$ for all $k$ and all $x \in I$, since
$f_k(x)$ is non-decreasing in $k$ and converges to $f(x)$ by assumption.
So we have that $|f_k(x)-f(x)| < \eps$ for all $x\in I$ and all $k \geq k_0$, which proves that 
$f_k$ converges uniformly as claimed.
\end{proofof}

\end{document}